\begin{filecontents}{quaternions.sty}
%
%
\ProvidesPackage{quaternions}[2009/11/16 v1.2 (s.sangwine@ieee.org)]
%
%
\NeedsTeXFormat{LaTeX2e}[1994/12/01] 
\RequirePackage{xspace}
\RequirePackage{amsmath}
%
%
\AtBeginDocument{
\PackageWarningNoLine{quaternions}{Redefining the commands \protect\v, \protect\i, \protect\j, \protect\k}
\renewcommand{\v}[1]{\boldsymbol{#1}} 
\renewcommand{\i}{\ensuremath{\v{i}}\xspace} 
\renewcommand{\j}{\ensuremath{\v{j}}\xspace}
\renewcommand{\k}{\ensuremath{\v{k}}\xspace}
\newcommand{\I}{\ensuremath{I}\xspace} 
}
%
%
\DeclareMathOperator{\scal}{S}
\newcommand{\Scalar}[1]{\ensuremath{\scal\!\left(#1\right)}\xspace}
\DeclareMathOperator{\vect}{\mathbf{V}}
\newcommand{\Vector}[1]{\ensuremath{\vect\!\left(#1\right)}\xspace}
%
%
\newcommand*{\qconjugate}[1]{\overline{#1}}                
\newcommand*{\cconjugate}[1]{{#1^\star}}                   

\newcommand{\inner}[2]{\ensuremath{\left\langle #1, #2\right\rangle}\xspace}
\newcommand{\norm}[1]{\ensuremath{\left\lVert#1\right\rVert}\xspace}
\newcommand{\modulus}[1]{\ensuremath{\left\lvert#1\right\rvert}\xspace}
%
%

%
%
\RequirePackage{amsfonts} 
  \newcommand*{\R}{\ensuremath{\mathbb{R}}\xspace}
\renewcommand*{\H}{\ensuremath{\mathbb{H}}\xspace}
  \newcommand*{\C}{\ensuremath{\mathbb{C}}\xspace}
  
  \newcommand{\B}{\ensuremath{\mathbb{B}}\xspace}
%
%
\let\oldmu\mu\renewcommand{\mu}{\v\oldmu}
\let\oldnu\nu\renewcommand{\nu}{\v\oldnu}
\let\oldxi\xi\renewcommand{\xi}{\v\oldxi}
%
%
\end{filecontents}
\documentclass[a4paper]{article}
\usepackage[latin1]{inputenc}
\usepackage{fullpage}
\usepackage{quaternions} 
\usepackage{amsthm}
\usepackage{xspace,tabularx,url,verbatim,color}
\usepackage[pdftex,colorlinks=true]{hyperref} 
\newcommand{\matlab}{\textsc{matlab}\textregistered\xspace}
\newtheorem{lemma}{Lemma}
\newtheorem{theorem}{Theorem}

\newtheorem{proposition}{Proposition}
\title{Fundamental representations and
       algebraic properties of biquaternions or complexified quaternions}
\author{Stephen J. Sangwine\thanks{This paper was started in 2005 at the
                                   Laboratoire des Images et des Signaux (now part of the GIPSA-Lab),
                                   Grenoble, France with financial support from the Royal Academy of
                                   Engineering of the United Kingdom and the Centre National de la Recherche
                                   Scientifique (CNRS). Thanks are due to Dr Sebastian Miron, who gave a
                                   short series of seminars on biquaternions in Grenoble in 2005,
                                   and thus started us on the road to this paper. We also acknowledge the
                                   contribution made by Daniel Alfsmann who, working with one of us, and
                                   using his knowledge of hypercomplex algebras in general, greatly clarified
                                   the topic of divisors of zero \cite{arXiv:0812:1102,10.1007/s00006-xxxx-xxxx-x}.}\\
        School of Computer Science and Electronic Engineering,\\
        University of Essex, Wivenhoe Park,\\
        Colchester, CO4 3SQ, United Kingdom.\\
        Email:~\href{mailto:s.sangwine@ieee.org}{S.Sangwine@IEEE.org}
\and
        Todd A. Ell\thanks{Dr T. A. Ell is a Visiting Fellow at the University of Essex, funded by grant
                           numbers GR/S58621 and EP/E010334/1 from the
                           United Kingdom Engineering and Physical Sciences Research Council from
                           September 2003 -- September 2009.}\\
        5620 Oak View Court, Savage, MN 55378-4695, USA.\\
        Email:~\href{mailto:t.ell@ieee.org}{T.Ell@IEEE.org}\\
\and
        Nicolas Le Bihan\\
        GIPSA-Lab\\
        Département Images et Signal\\
        961 Rue de la Houille Blanche, Domaine Universitaire\\
        BP 46, 38402 Saint Martin d'Hères cedex, France.\\
        Email:~\href{mailto:nicolas.le-bihan@gipsa-lab.inpg.fr}{nicolas.le-bihan@gipsa-lab.inpg.fr}\\
}
\begin{document}
\maketitle
\begin{abstract}
The fundamental properties of biquaternions (complexified quaternions) are presented
including several different representations,
some of them new,
and definitions of fundamental operations such
as the scalar and vector parts,
conjugates, semi-norms, polar forms,
and inner and outer products.
The notation is consistent throughout, even between representations,
providing a clear account of the many ways in which the component parts
of a biquaternion may be manipulated algebraically.
\end{abstract}
\section{Introduction}
\begin{quote}
It is typical of quaternion formulae that, though they be difficult to find,
once found they are immediately verifiable.\\\hspace*{\fill} J. L. Synge (1972) \cite[p34]{Synge:1972}
\end{quote}
The quaternions are relatively well-known but the quaternions with complex components (complexified
quaternions, or biquaternions\footnote{Biquaternions was a word coined by Hamilton himself
\cite{Hamiltonpapers:V3:35} and \cite[§\,669, p664]{Hamilton:1853}. (The word was used 18 years
later by Clifford \cite{10.1112/plms/s1-4.1.381} for a different concept, which is unfortunate.)})
are less so.
This paper aims to set out the fundamental definitions of biquaternions and some
elementary results, which, although elementary, are often not trivial.
The emphasis in this paper is on the biquaternions as an applied algebra -- that is, a
tool for the manipulation of algebraic expressions and formulae to allow deep insights into
scientific and engineering problems.
It is not a study of the abstract properties of the biquaternion algebra, nor
its relations with other algebras. Throughout the paper `quaternion' means a quaternion with
real components (a quaternion over the reals, \R), and `biquaternion' means a quaternion with
complex components (a quaternion over the field of complex numbers, \C). We denote the set of
quaternions by \H, and the set of biquaternions by \B. \H is, of course, a subset of \B.

Some of the material in the paper is based on the book by Ward \cite[Chapter 3]{Ward:1997}
which is one of the few sources of detail on the biquaternions.
We have not followed Ward's notation in this paper, preferring instead a
scheme based on bold or plain symbols without hats and underlines.

We have also drawn upon the paper by Sangwine and Alfsmann
\cite{arXiv:0812:1102,10.1007/s00006-xxxx-xxxx-x}
which sets out comprehensive results on the divisors of zero,
and their subsets the idempotents and nilpotents.
Sangwine and Alfsmann's paper uses the same notations as this paper
(having benefited from access to this paper in draft).

The quaternions themselves (with real elements) are well-covered in various books, for example
\cite{Altmann:1986,Artmann:88,Kantor:1989,KellandTait,Kuipers:1999}.
Hamilton's works on quaternions were published in book form in
\cite{Hamilton:1853,Hamilton:1866,Hamiltonpapers:V3},
and many are also now available freely on the Internet in various digital repositories.
The paper by Coxeter \cite{Coxeter:1946} is also a useful source.

We begin by setting out various ways in which quaternions and biquaternions may be represented.
We start with representations for quaternions,
for reference,
and because these representations are generalized for biquaternions.
In what follows, we use notation as consistently as we can.
In particular, the complex operator usually denoted by $i$
(or in electrical engineering $j$)
is represented in this paper as $\I$ in every case.
This is to keep the complex operator distinct from the first
of the three quaternion operators \i,
since it is independent.
The independence of \i and \I is perhaps the most fundamental
axiomatic aspect of the biquaternions that must be understood.
Bold symbols denote vectors and bivectors\footnote{Bivectors represent directed areas, and
are explained in Table~\ref{correspondence} and §\,\ref{sec:geometric}.},
whereas normal weight symbols denote scalar or complex quantities or quaternions.

Throughout the paper\footnote{Except in equation \ref{eqn:normprops}, where we use
the notation of the cited reference.}
we use the term \emph{norm} or the more specialized term \emph{semi-norm},
both denoted $\norm{q}$,
to mean the sum of the squares of the components of a quaternion or biquaternion,
and \emph{modulus}, denoted $\modulus{q}$,
to mean the square root of the norm and thus the Euclidean magnitude.
This is not universally accepted terminology,
many sources using \emph{norm} where we use \emph{modulus}.
However, our usage is consistent with several authors who have written on quaternions,
including Synge \cite{Synge:1972} and Ward \cite{Ward:1997},
but it does require care when using statements made about norms in other sources.

Many of the concepts given in this paper are implemented in numerical form in a
\matlab toolbox \cite{qtfm} which two of the authors first developed in 2005 and
are built upon in a toolbox for handling linear quaternion systems, first developed
in 2007 \cite{lqstfm}.
The toolbox \cite{qtfm} was essential to the development of this paper and the
results presented within it, since otherwise, errors in algebra would go unnoticed.
In many cases we have established results first by using the toolbox, and then derived
the algebraic proofs or statements which appear here.

\section{Quaternions}
\label{sec:realq}
Classically, quaternions are represented in the form of hypercomplex numbers with three imaginary
components. In Cartesian form this is:
\begin{equation}\label{classiccartesian}
q = w + x\i + y\j+ z\k
\end{equation}
where $\i$, $\j$ and $\k$ are mutually perpendicular unit bivectors\footnote{Classic texts often
refer to the operators $\i$, $\j$ and $\k$ as \emph{vectors},
a misconception that has caused considerable confusion over many years,
but is understandable, since it could not be cleared up without the
concept of geometric algebra and bivectors. We discuss this in §\,\ref{sec:geometric}.} obeying the
famous multiplication rules: $\i^2 = \j^2 = \k^2 = \i\j\k = -1$, discovered by Hamilton in 1843
\cite{Hamiltonpapers:V3:5}\nocite{Hamilton:1844}, and $w$, $x$, $y$, $z$, are real.
Quaternions are generalized to biquaternions by
permitting $w$, $x$, $y$ and $z$ to be complex, as discussed in §\,\ref{sec:complexq}, but
in this paper we reserve the symbols $w$, $x$, $y$, $z$ for the real case.
A quaternion with $w=0$ is known as a \emph{pure} quaternion (Hamilton's terminology, but still
used and widely understood).

The conjugate of a quaternion is given by negating the three imaginary components:
$\qconjugate{q} = w - x\i - y\j - z\k$. It is easily shown that
$\qconjugate{q}\,\qconjugate{p} = \qconjugate{p q}$ for general quaternions $p$ and $q$. Indeed
the formula may be generalized to more than two quaternions (the generalized formula was first noted
by Hamilton \cite[§\,20, p238]{Hamiltonpapers:V3:8}, and was also included in
\cite[p60, §\,31]{KellandTait}):
$\qconjugate{pqrst}=\qconjugate{t}\,\qconjugate{s}\,\qconjugate{r}\,\qconjugate{q}\,\qconjugate{p}$.
The quaternion conjugate may be expressed in terms of multiplications and additions
\cite[Theorem 11]{10.1016/j.camwa.2006.10.029} using any system of three mutually
orthogonal unit pure quaternions (here $\i,\j,\k$):
\[
\qconjugate{q} = -\frac{1}{2}\left(q+\i q\i + \j q\j + \k q\k \right)
\]
Similar formulae, based on involutions \cite{arXiv:math.RA/0506034,10.1016/j.camwa.2006.10.029}, 
exist for extracting the four Cartesian components of a
quaternion\footnote{These formulae, and that for the quaternion conjugate,
generalize to biquaternions, even with mutually orthogonal unit pure
biquaternions in place of \i, \j and \k, although this latter point has
not been thoroughly checked.}
\cite{Sudbery:1979}:
\begin{equation}
\label{eqn:sudbery}
\begin{alignedat}{2}
w &= \frac{1}{4}  &&\left(q - \i q\i - \j q\j - \k q\k\right)\\
x &= \frac{1}{4\i}&&\left(q - \i q\i + \j q\j + \k q\k\right)\\
y &= \frac{1}{4\j}&&\left(q + \i q\i - \j q\j + \k q\k\right)\\
z &= \frac{1}{4\k}&&\left(q + \i q\i + \j q\j - \k q\k\right)
\end{alignedat}
\end{equation}

The norm of a quaternion is given by the sum of the squares of its components:
$\norm{q} = w^2 + x^2 + y^2 + z^2,\;\norm{q}\in\R$.
It can also be obtained by multiplying the quaternion
by its conjugate, in either order since a quaternion and its conjugate commute:
$\norm{q}=q\qconjugate{q}=\qconjugate{q}q$.
The modulus of a quaternion is the square root of its norm: $\modulus{q} = \sqrt{\norm{q}}$.

Every non-zero quaternion has a multiplicative inverse\footnote{This does not apply to biquaternions.}
given by its conjugate divided by its norm: $q^{-1} = \qconjugate{q}/\norm{q}$.

The quaternion algebra \H is a normed division algebra, meaning that for any two quaternions $p$ and
$q$, $\norm{p\,q} = \norm{p}\,\norm{q}$, and the norm of every non-zero quaternion is non-zero (and
positive) and therefore the multiplicative inverse exists for any non-zero quaternion.

Of course, as is well known, multiplication of quaternions is not commutative, so that in general
for any two quaternions $p$ and $q$, $pq \ne qp$.
This can have subtle ramifications, for example: $(p\,q)^2 = p\,q\,p\,q \ne p^2q^2$.

Alternative representations for quaternions are given in Table \ref{quatrep},
expressed in terms of the Cartesian form given above in equation \ref{classiccartesian},
and in selected cases, in terms of other representations given here.
\begin{table}
\begin{center}
\begin{tabularx}{\textwidth}{X|l|l}
\hline
Designation     & Representation                 & Details ($w, x, y, z, a, b, r, \theta\in\R, q\in\H$,\\
                &                                &          $\mu\in\H, \Scalar{\mu}=0, \mu^2=-1$)\\
\hline\hline
Cartesian       & $q = w + x\i + y\j+ z\k$       & \\ 
\hline
Scalar + vector & $q = \Scalar{q} + \Vector{q}$  & $\Scalar{q} = w$\\
                &                                & $\Vector{q} = x\i + y\j + z\k$\\
\hline  
`Complex' form  & $q = a +\mu b$                 & $a = w$\\
                &                                & $b = \sqrt{x^2 + y^2 + z^2}=\modulus{\Vector{q}}$
                                                   \rule{0pt}{2.5ex}\\ 
                &                                & $\mu = \Vector{q}/\modulus{\Vector{q}}$\\
\hline   
Cayley-Dickson  & $q = (w + x\i) + (y + z\i)\j$\rule{0pt}{2.5ex} 
                                                 & This multiplies out to the Cartesian\\
                &                                & representation.\\
\hline   
Polar form      & $q = r\exp\left(\mu\theta\right) = r\left(\cos\theta+\mu\sin\theta\right)$\rule{0pt}{2.5ex}
                                                 & $r = \modulus{q}$\\
                &                                & $r\cos\theta=a$\\
                &                                & $r\sin\theta=b$\\
\hline
Cayley-Dickson  & $q = \mathcal{A}\exp(\mathcal{B}\j)$              & $\mathcal{A}, \mathcal{B}\in\H\quad\text{with}\quad y=z=0$\\
polar form      &                                & (isomorphic to \C)\\
                &                                & See \cite{10.1007/s00006-008-0128-1,arXiv:0802.0852}
                                                   for formulae defining $\mathcal{A}$ and $\mathcal{B}$.\\
\hline
\end{tabularx}
\caption{\label{quatrep} Representations for quaternions.}
\end{center}
\end{table}
$\mu$ is a unit pure quaternion and is known as the \emph{axis} of the quaternion. It expresses the
direction in 3-space of the vector part\footnote{We use the term `vector part' throughout this paper
to mean that part of the quaternion consisting of the three components containing \i, \j and \k. It
does not necessarily correspond to the concept of a 3-space vector, since it could be a vector,
bivector or a combination of both, using the language of geometric algebra described later. The
term `vector part' is well-established in the literature and, lacking a good alternative
which would be readily understood, we retain it.},
\Vector{q}. Hamilton himself showed that any unit pure
quaternion is a square root of $-1$
\cite[pp\,203, 209]{Hamiltonpapers:V3:7}\nocite{Hamilton:1848}\cite[§\,167, p179]{Hamilton:1853}
and a proof is also given in \cite[Lemma~1]{arXiv:math.RA/0506034}.
This is why we call the form $a + \mu b$ the `complex' form, since it is isomorphic to a complex number
$a+\I b$. This means that the modulus and argument of the quaternion are identical to
those of $a+\I b$ (one could think of them as having similar Argand diagrams, where in
the case of quaternions, the Argand diagram represents a plane section of 4-space defined by
the `axis' $\mu$ and the scalar quaternion axis -- along which $w$ is measured).

The polar form of a quaternion is analogous to the polar form of a complex number, with one
exception. The argument, $\theta$, is confined to the interval $[0,\pi)$ because the modulus
of the vector part is always taken to be positive (there is no convenient way to define an orientation
in 3-space which would permit the sign of the vector part to be determined). If we negate the
argument of the exponential in the polar form, therefore, the negation is conventionally applied
to the axis, $\mu$ and not to the argument $\theta$. When $\theta$ is computed numerically, the
result is always in $[0,\pi)$
because we have to use the (non-negative) modulus of the vector part to compute it
(using an \texttt{atan2} function, typically).

The Cayley-Dickson polar form \cite{10.1007/s00006-008-0128-1,arXiv:0802.0852}
has a complex modulus and a complex argument (both in fact are degenerate quaternions
of the form $w+\i x$, isomorphic to complex numbers).

\section{Biquaternions}
\label{sec:complexq}
To generalize the quaternions to biquaternions we simply permit the four elements to be
complex rather than real, thus giving us the Cartesian representation:
\begin{equation}\label{complexcartesian}
q = W + X\i + Y\j + Z\k
\end{equation}
where $\i$, $\j$ and $\k$ are exactly as in §\,\ref{sec:realq} and $W$, $X$, $Y$, $Z$, are complex.
This generalization was first studied by Hamilton himself 
\cite{Hamiltonpapers:V3:35} and \cite[§\,669, p664]{Hamilton:1853}, and was also
discussed by Cayley \cite{Cayley:1890}.

In equation \ref{complexcartesian} each of the four elements is of the form
$W = \Re(W) + \I\Im(W)$, where $\I^2=-1$
is the usual complex operator, distinct from $\i$. Axiomatically, $\I$ commutes with the three
quaternion operators $\i$, $\j$ and $\k$, that is $\i\I=\I\i$, $\j\I=\I\j$ and $\k\I=\I\k$.
Since reals commute with the three quaternion operators, so do all complex numbers. It is
also worth noting that any quaternion with only one non-zero component in the vector part,
for example $w +\i x$ is not a complex number, but a degenerate quaternion. Such numbers
are isomorphic to the complex numbers, but it is best not to confuse them with complex numbers when
working with biquaternions.

Some familiar rules of algebra apply to biquaternions.
Since the real and imaginary parts are quite separate from the
concept of the four quaternion components,
real and imaginary parts may be equated just as when working with complex equations.
However,
some of the elementary properties of quaternions become
non-elementary when the quaternions are complexified.
For example, generalizations of the norm and modulus of a quaternion
are complex in general for biquaternions,
and so is the argument in one of the polar forms.
We discuss each of these non-trivial properties in a later section.

The existence of complex generalizations of the norm, modulus and inner product,
yields a problem of terminology, since conventionally these quantities are real,
and satisfy properties that their complex generalizations cannot
(the \emph{triangle inequality} $\norm{p+q}\le\norm{p}+\norm{q}$,
for example, requires ordering,
but complex numbers lack ordering,
hence the triangle inequality cannot be applicable to a `norm' with a complex value).
Rather than invent new terms, we use the existing accepted terms (with the
exception of \emph{norm}, where we substitute \emph{semi-norm}, for reasons
discussed in §\,\ref{sec:seminorm}),
but caution the reader that because these quantities
are complex, they cannot be assumed to satisfy all the usual properties of
their conventional real equivalents.
In taking this approach, we are following Synge \cite[p\,9]{Synge:1972},
and to some extent Ward \cite{Ward:1997},
both of whom use the term \emph{norm} to refer to a complex generalization.
Synge also refers to a scalar product with a complex value, and Ward uses the
term \emph{inner product} for a complex generalization of the concept
in his section on the Minkowski metric \cite[§\,3.3, p\,115]{Ward:1997}.

Although a biquaternion commutes with its quaternion conjugate,
and complex numbers commute
(including with their complex conjugates),
somewhat surprisingly,
a biquaternion does not necessarily commute with its complex conjugate
(Proposition \ref{cconjncommute} in §\,\ref{sec:conjugates}).

Alternative representations for biquaternions are shown in Table~\ref{biquatrep},
expressed in terms of the Cartesian form given above in equation \ref{complexcartesian},
and in selected cases, in terms of other representations given here.
\begin{table}[ht]
\begin{center}
\begin{tabularx}{\textwidth}{X|l|l}
\hline
Designation       & Representation                 & Details: $w_a, x_a, y_a, z_a\in\R, a\in\{r,i\}$,\\
                  &                                &          $W, X, Y, Z, A, B, R, \Theta\in\C$,\\
                  &                                &          $q_r, q_i, Q, \Psi\in\H; q\in\B$,\\
                  &                                &          $\xi\in\B, \xi^2=-1$
                                                     \cite{10.1007/s00006-006-0005-8, arXiv:math.RA/0506190}\\
\hline\hline
Cartesian         & $q = W + X\i + Y\j + Z\k$      & \\
\hline
Scalar + vector   & $q = \Scalar{q} + \Vector{q}$  & $\begin{aligned}
                                                      &\Scalar{q} = W\\
                                                      &\Vector{q} = X\i+ Y\j + Z\k\\
                                                      \end{aligned}$\\
\hline
`Complex' form I  & $q = A + \xi B$                & $\begin{aligned}
                                                      &A = W = \Scalar{q}\\
                                                      &\xi B = \Vector{q}\\
                                                      &B = \sqrt{X^2 + Y^2 + Z^2}=\modulus{\Vector{q}}\\
                                                      &\xi = (X\i+ Y\j + Z\k)/B\end{aligned}$\\
\hline
`Complex' form II & $q = q_r + \I q_i
                       = \Re(q) +\I\Im(q)$         & $\begin{aligned}
                                                      q_r &= w_r + x_r\i + y_r\j + z_r\k\\
                                                      q_i &= w_i + x_i\i + y_i\j + z_i\k\\
                                                      \end{aligned}$\\
\hline
Expanded form     & $\begin{aligned}
                     q = \quad&w_r + x_r\i + y_r\j + z_r\k\quad+\\
                          \I(&w_i + x_i\i + y_i\j + z_i\k)
                     \end{aligned}$                & $\begin{aligned}
                                                      w_r &= \Re(W), w_i = \Im(W)\\
                                                      x_r &= \Re(X), x_i = \Im(X)\\
                                                      y_r &= \Re(Y), y_i = \Im(Y)\\
                                                      z_r &= \Re(Z), z_i = \Im(Z)\\
                                                      \end{aligned}$\\
\hline
Hamilton polar form & $\begin{aligned}
                       q &= R\exp\left(\xi\Theta\right)\\
                         &= R\left(\cos\Theta+\xi\sin\Theta\right)\\
                       \end{aligned}$
                                                   & $\begin{aligned}
                                                      R &= \modulus{q}\rule{0pt}{2.5ex}\\
                                                      A &= R\cos\Theta\\                                                                               
                                                      B &= R\sin\Theta\\
                                                      \end{aligned}$\\
\hline
Complex polar form & $\begin{aligned}
                      q &= Q\exp\left(\I\Psi\right)\\
                        &= Q\left(\cos\Psi+\I\sin\Psi\right)\\
                      \end{aligned}$                 & $\begin{aligned}
                                                        \Psi &= \tan^{-1}(q_r^{-1}q_i)\rule{0pt}{2.5ex}\\
                                                        Q &= q/\exp\left(\I\Psi\right)=q\exp\left(-\I\Psi\right)\\
                                                        q_r &= Q\cos\Psi\\
                                                        q_i &= Q\sin\Psi\\
                                                        \end{aligned}$\\
\hline
\end{tabularx}
\caption{\label{biquatrep} Representations for biquaternions.}
\end{center}
\end{table}
In the scalar/vector form, the scalar part is complex,
and the vector part is a pure biquaternion.

Complex form I corresponds to the `complex' form in Table \ref{quatrep}.
The differences are that $A$ and $B$ are now complex,
whereas $a$ and $b$ were real,
and the imaginary unit $\xi$ is a pure biquaternion root of $-1$ of the form
$\xi = b\mu+d\I\nu$, where $b^2-d^2=1$ and $\mu$ and $\nu$ are mutually
perpendicular unit pure quaternions (themselves also roots of $-1$)
\cite{arXiv:math.RA/0506190,10.1007/s00006-006-0005-8}.
Note that $B$ can vanish
-- as discussed in §\,\ref{sec:nilpotents} and in detail in
\cite[§\,2]{arXiv:0812:1102,10.1007/s00006-xxxx-xxxx-x}
-- although its components ($X$, $Y$ and $Z$),
and therefore $q$, do not vanish.
$\xi$ can be thought of as a complex axis,
in the sense that it has real and imaginary parts which each define directions in 3-space.
The geometric interpretation of biquaternions is discussed further in §\,\ref{sec:geometric}.

Complex form II has quaternions in the real and imaginary parts,
and is perhaps the most obvious representation for a biquaternion other
than the Cartesian form.
It is related to the complex polar form described below.

In the expanded form,
the biquaternion is represented as a complex number with quaternion real and imaginary parts
expressed in Cartesian form.

The polar form of a quaternion depends on Euler's formula $\exp(\I\theta)=\cos\theta+\I\sin\theta$
which generalizes by replacing the complex operator \I with any root of $-1$.
In the case of quaternions,
the set of unit pure quaternions provides an infinite number of roots of $-1$
and the general polar form $r\exp(\mu\theta)$, as given in Table \ref{quatrep}
is therefore a straightforward extension of Euler's formula.
In the case of biquaternions there are two possibilities for the root of $-1$:
the complex root $\I$,
or any one of the biquaternion roots of $-1$ defined in §\,\ref{sec:roots}.
Thus we have two possible fundamental polar forms.

The first (`Hamilton') polar form generalizes the polar form in the real case.
$R$, the `modulus' of the biquaternion, is complex; $\xi$ is a biquaternion root of $-1$;
and $\Theta$, the `angle' in the exponential, is also complex\footnote{
The cosine and sine of a complex angle are simply defined in terms of Euler's formula as
$\cos z = \frac{1}{2}\left(e^{\I z}+e^{-\I z}\right)$ and
$\sin z = -\frac{1}{2}\I\left(e^{\I z}-e^{-\I z}\right)$; or, if we write $z = x + \I y$:
$\cos z = \cos x\cosh y - \I\sin x\sinh y$ and $\sin z = \sin x\cosh y + \I\cos x\sinh y$
\cite[See \textbf{complexes}]{Bouvier:2e}.}.
The interpretation of complex angles is discussed further in §\,\ref{sec:inner}.

In the second (`complex') polar form,
the standard complex operator \I serves as the root of $-1$ in the exponential,
but in this form, the `argument' of the exponential, $\Psi$, is a quaternion,
and the exponential is scaled by a quaternion `modulus' $Q$.
We thus have a polar form with a `modulus' and `argument' in \H.
Note that it is not possible to find $Q$ by the obvious direct route of
$Q = \sqrt{q_r^2+q_i^2}$ because $q_r = Q\cos\Psi$
and squaring this gives $Q\cos\Psi\,\,Q\cos\Psi$
and not $Q^2\cos^2\Psi$.
However, it is the case that $\cos^2\Psi+\sin^2\Psi=1$, as would be expected.
Further, note that care is needed in computing the inverse tangent in order to find $\Psi$:
it is important that the real part is divided on the left.
This is because
\begin{align*}
\tan\Psi &= (Q\cos\Psi)^{-1}(Q\sin\Psi) = \cos^{-1}\!\Psi\,\,Q^{-1}Q\,\sin\Psi = \cos^{-1}\!\Psi\,\sin\Psi
\intertext{whereas with a division on the right we have:}
\tan\Psi &\ne (Q\sin\Psi)(Q\cos\Psi)^{-1} = Q\sin\Psi\cos^{-1}\!\Psi\,\,Q^{-1}
\end{align*}
and $Q$ and its inverse are at opposite ends of the product and do not cancel.
(Note that $\cos\Psi$ and $\sin\Psi$ commute, so their quotient is the same
whether divided on the left or right.)
The exponential is a biquaternion because of the presence of \I and it has a complex modulus.
The modulus of this complex modulus is 1.
We discuss both polar forms further in §\,\ref{sec:conjugates} in the context of conjugation.
Finally, note that in the complex polar form, $Q$ and the exponential do not commute.
It is therefore possible to define a variant by placing $Q$ on the right of the exponential.
The variant is related to the form in Table~\ref{biquatrep} by the conjugate rule.

De~Leo and  Rodrigues \cite{arxiv:hep-th/9806058,10.1023/A:1026692508708}
discussed polar forms of biquaternions but apparently did not see that there
were two simple polar forms as here, with different imaginary units.
Instead they described a single polar form containing the product of two exponentials.
We can do this with either of our polar forms, representing the `modulus' in each case
in its own polar form. Thus the `Hamilton' polar form can be written:
\begin{equation}
\label{hamiltonpolar}
q = R\exp\left(\xi\Theta\right) = r\exp\left(\I\phi\right)\exp\left(\xi\Theta\right)
\end{equation}
where $r,\phi\in\R$ are the modulus and argument of $R$, the complex `modulus' of $q$.
Note that, because \I and $\xi$ commute, the two exponentials commute,
and it is possible to write this polar form as:
\[
q = r\exp(\I\phi + \xi\Theta)
\]
where the argument of the exponential is now a biquaternion with $\I\phi$ as scalar
part and $\xi\Theta$ as vector part.
(In general, with quaternions as well as biquaternions, $e^p\,e^q\ne e^{pq}$
because of non-commutativity.)

Similarly, the `complex' polar form can also be written in this way, expanding the
quaternion `modulus', $Q$, into the standard polar form of a quaternion as given in
Table~\ref{quatrep}:
\begin{equation}
\label{complexpolar}
q = Q\exp\left(\I\Psi\right) = r\exp\left(\mu\theta\right)\exp\left(\I\Psi\right)
\end{equation}
where $\mu\in\H$ and $r,\theta\in\R$.
Notice that the single real modulus $r$ is the same in each case,
but the various `angles' are different in value and type ($\phi,\theta\in\R, \Theta\in\C, \Psi\in\H$).
The real modulus, $r$, is the absolute value of the square root of the semi-norm
as discussed in §\,\ref{sec:seminorm}.
In this case, the arguments of the two exponentials
(specifically $\mu$ and $\Psi$)
do not commute,
hence the two exponentials cannot be combined by adding $\mu\theta$ to $\I\Psi$,
and neither can the order of the exponentials be changed.

We can usefully combine the `complex' representation of a quaternion from Table \ref{quatrep} with
`complex' form II from Table \ref{biquatrep} to give the following
representation, which was used in \cite{10.1007/s00006-006-0005-8, arXiv:math.RA/0506190} to derive the
biquaternion roots of $-1$:
\begin{equation}
\label{dcomplex}
q = (\alpha + \mu\beta) + \I(\gamma + \nu\delta)
\end{equation}
in which $\mu$ and $\nu$ are real pure unit quaternions, and $\alpha$, $\beta$, $\gamma$ and
$\delta$ are real. The four real coefficients in this representation may be related to the
coefficients in the other representations given above. For example: $\alpha + \I\gamma = A = W$.
The correspondence between $\xi$, and $\mu$ and $\nu$ is not so simple. Equating the vector part
of equation \ref{dcomplex} with the vector part of `complex' form I, we get:
\[
\mu\beta + \I\nu\delta = \xi B = \xi\sqrt{X^2 + Y^2 + Z^2}
\]
and we can see that dividing $\mu\beta + \I\nu\delta$ by its (complex) modulus $B$, will yield $\xi$
\emph{provided that $B$ does not vanish, as discussed in §\,\ref{sec:nilpotents}}.

We may relate each of the terms in equation \ref{dcomplex} to a concept from geometric algebra
\cite{HestenesSobczyk:1984,Sommer:2001},
as shown in Table~\ref{correspondence} with equivalent definitions based on various representations
from Table~\ref{biquatrep}.
\begin{table}[ht]
\begin{center}
\begin{tabularx}{\textwidth}{X|c|r@{\,}c@{\,}r@{\,}c@{\,}r@{\,}c@{\,}l}
\hline
Geometric algebra concept & Term in equation \ref{dcomplex} &
\multicolumn{7}{c}{Terms from Table \ref{biquatrep}}\\
\hline
\hline
Scalar       -- undirected quantity & $\alpha$      & $\Re(W)$      &=&$\Re(A)$           &=&$\Scalar{q_r}$ &=& $w_r$\\
Bivector     -- directed area       & $\mu\beta$    & $\Re(\xi B)$  &=&$\Re(\Vector{q})$  &=&$\Vector{q_r}$&&\\
Vector       -- directed magnitude  & $\I\nu\delta$ & $\I\Im(\xi B)$&=&$\I\Im(\Vector{q})$&=&$\I\Vector{q_i}$&&\\
Pseudoscalar -- undirected volume   & $\I\gamma$    & $\I\Im(W)$    &=&$\I\Im(A)$         &=& $\I\Scalar{q_i}$ &=& $\I w_i$\\
\hline
\end{tabularx}
\end{center}
\caption{\label{correspondence}Correspondence between geometric algebra concepts
                               and biquaternion components.}
\end{table}

These equivalences are given by Ward \cite[§\,3.2, p\,112]{Ward:1997}
and they are discussed in more detail in §\,\ref{sec:geometric}.
Note carefully that geometric \emph{vectors} are represented by imaginary pure quaternions,
and that real pure quaternions are \emph{bivectors}.
This is because the product of two perpendicular vectors must yield a bivector.
The product of two bivectors gives a bivector
(and a scalar, unless the bivectors are perpendicular).

The representation in equation \ref{dcomplex} was used in
\cite{10.1007/s00006-006-0005-8,arXiv:math.RA/0506190}
to derive the solutions of the equation $q^2 = -1$ (that is
the biquaternion roots of $-1$) when $q$ is a biquaternion, and it was shown
that the solutions required $\alpha=\gamma=0$, $\mu\perp\nu$, and $\beta^2-\delta^2=1$. Clearly any
of the other representations given in this section would not permit this result to be expressed so
clearly, nor to be easily derived. We return to this result in Theorem \ref{rootsofunity}.

The biquaternion algebra $\B$ is not a division algebra because non-zero biquaternions
exist that lack a multiplicative inverse.
The set of such biquaternions is known as the \emph{divisors of zero}
\cite{arXiv:0812:1102,10.1007/s00006-xxxx-xxxx-x} and is defined in §\,\ref{sec:divisors}.

\subsection{Conjugates}
\label{sec:conjugates}
The conjugate of a biquaternion may be defined exactly as for a quaternion by
negating the vector part. Thus we have
\begin{equation}
\label{qconjugate}
\qconjugate{q} = W - X\i - Y\j - Z\k = \Scalar{q} - \Vector{q} = \qconjugate{q_r} + \I\qconjugate{q_i}
\end{equation}
We call this the `Hamiltonian' or \emph{quaternion conjugate}, in agreement with Synge
\cite[Equation 3.4, p\,8]{Synge:1972}.
The conjugate rule $\qconjugate{q}\,\qconjugate{p} = \qconjugate{p\,q}$ and its generalization
to more than two quaternions applies equally to biquaternions.
Similarly,
a biquaternion commutes with its quaternion conjugate,
and the product of the two is the semi-norm,
as discussed in §\,\ref{sec:seminorm}.

However, there is another possible conjugate -- that obtained by taking the complex
conjugate of the complex components of the quaternion. We denote this complex conjugate by
a superscript star, which is a common convention with complex numbers. Thus the \emph{complex
conjugate} is given by:
\begin{equation}
\label{cconjugate}
\cconjugate{q} = \cconjugate{W} + \cconjugate{X}\i + \cconjugate{Y}\j + \cconjugate{Z}\k
               = \cconjugate{\Scalar{q}} + \cconjugate{\Vector{q}} 
               = q_r - \I q_i
\end{equation}
Our definition in equation~\ref{cconjugate} agrees with that of Synge \cite[p8, equation
3.4]{Synge:1972} and appears to be the obvious way to define the complex conjugate, but it differs
from that of Ward \cite{Ward:1997} who defines a complex conjugate which is a quaternion conjugate
with complex conjugate components.
There is no way to define the complex conjugate in terms of additions and multiplications
as can be done with the quaternion conjugate --- if a means existed, it would also apply to complex
numbers because a degenerate biquaternion of the form $\alpha+\I\gamma$ with $\alpha,\gamma\in\R$,
is isomorphic to a complex number.
Instead the complex conjugate must be seen as a fundamental operation.
Note very carefully that a biquaternion \emph{does not commute} with its complex conjugate,
a surprising result that we examine in Proposition~\ref{cconjncommute} below.

Of course, we can apply both conjugates \cite{Ward:1997} (we call this a \emph{total} conjugate,
but the term \emph{biconjugate} has also been suggested in \cite{arXiv:physics/0508036},
and we consider it apposite).
It is not difficult to show the following results:
\begin{align*}
\qconjugate{\cconjugate{q}} &= \cconjugate{\qconjugate{q}\,}\\
\cconjugate{(p q)}          &= \cconjugate{p}\cconjugate{q}
\end{align*}
In terms of the various representations above, the total or biconjugate is:
\begin{equation}
\label{biconjugate}
\qconjugate{\cconjugate{q}} = w_r - x_r\i - y_r\j - z_r\k - \I(w_i - x_i\i - y_i\j - z_i\k)
                            = \qconjugate{q_r} - \I\,\qconjugate{q_i}
                            = \cconjugate{\Scalar{q}} - \cconjugate{\Vector{q}}
\end{equation}
However,
the ramifications of non-commutativity are deep,
as shown by the following Proposition.
\begin{proposition}
\label{cconjncommute}
A biquaternion does not, in general, commute with its complex conjugate:
reversing the order of the product yields a complex conjugate result.
\end{proposition}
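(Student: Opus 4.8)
The plan is to work in `complex' form~II, writing $q = q_r + \I q_i$ with $q_r, q_i \in \H$, so that $\cconjugate{q} = q_r - \I q_i$ (equation~\ref{cconjugate}). The entire argument then reduces to expanding the two products $q\cconjugate{q}$ and $\cconjugate{q}q$ and comparing them, using only the axiom that \I commutes with every quaternion while $\I^2 = -1$.

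First I would multiply out $q\cconjugate{q} = (q_r + \I q_i)(q_r - \I q_i)$. Since \I commutes with $q_r$ and $q_i$, but these two quaternions need not commute with one another, the cross terms do not cancel; collecting terms gives
\[
q\cconjugate{q} = \left(q_r^2 + q_i^2\right) + \I\left(q_i q_r - q_r q_i\right).
\]
Running the same computation in the opposite order yields
\[
\cconjugate{q}q = \left(q_r^2 + q_i^2\right) + \I\left(q_r q_i - q_i q_r\right).
\]
The two products share the same quaternion part $q_r^2 + q_i^2$ (the summand carrying no factor of \I) and differ only in the sign of the coefficient of \I.

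Comparing, the products coincide exactly when $q_r q_i = q_i q_r$, that is, when the quaternion real and imaginary parts happen to commute; since \H is non-commutative this fails in general, and I would record a concrete witness such as $q_r = \i$, $q_i = \j$, for which $q_i q_r - q_r q_i = \j\i - \i\j = -2\k \ne 0$. That settles the first assertion. For the second, I note that the coefficients of \I in the two products are exact negatives while the remaining parts coincide; because complex conjugation in this representation is precisely the map $q_r + \I q_i \mapsto q_r - \I q_i$ that negates the coefficient of \I, this is exactly the statement $\cconjugate{q}\,q = \cconjugate{\left(q\,\cconjugate{q}\right)}$, which is the `reversal yields a complex conjugate' claim.

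There is no deep obstacle here: once the representation is fixed the argument is a short direct calculation. The one point that demands care -- and the likely source of error -- is the bookkeeping of the non-commuting cross terms $q_r q_i$ against $q_i q_r$. One must resist treating $(q_r + \I q_i)(q_r - \I q_i)$ as a difference of squares, since that identity silently assumes commutativity and would collapse the coefficient of \I to zero, concealing the very phenomenon the Proposition asserts.
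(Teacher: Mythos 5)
Your proposal is correct and follows essentially the same route as the paper: represent $q = q_r + \I q_i$, expand both products using the commutativity of $\I$ with quaternions, and observe that the two results form a complex conjugate pair differing in the sign of the commutator term $\I(q_i q_r - q_r q_i)$. Your explicit witness $q_r = \i$, $q_i = \j$ is a small, worthwhile addition that the paper omits (it instead lists the exceptional commuting cases after the proof), but it does not change the substance of the argument.
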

\begin{proof}
Represent an arbitary biquaternion in complex form I, as given in Table \ref{biquatrep}:
$q = q_r + \I q_i$.
Then the two products of $q$ with its complex conjugate $\cconjugate{q}$ are:
\begin{align*}
q\,\cconjugate{q}\!&= \left(q_r +\I q_i\right)\left(q_r -\I q_i\right)=q_r^2 + q_i^2 + I(q_i q_r - q_r q_i)\\
\cconjugate{q} q   &= \left(q_r -\I q_i\right)\left(q_r +\I q_i\right)=q_r^2 + q_i^2 - I(q_i q_r - q_r q_i)
\end{align*}
The results are a complex conjugate pair, and therefore differ.
\end{proof}
Note that important exceptions to Proposition \ref{cconjncommute} are:
\begin{itemize}
\item Quaternions, that is biquaternions with $q_i=0$ (trivial).
\item Imaginary biquaternions, with $q_r=0$.
      In this case the product is $q_i^2$, regardless of order.
\item Biquaternions with real and imaginary parts that commute
      (co-planar, with a common axis).
      In this case the imaginary part of the product vanishes and the
      result is $q_r^2 + q_i^2$, regardless of order.
\end{itemize}

The Hamilton and complex conjugates correspond to negation of the argument of the
exponential in the two polar forms in Table~\ref{biquatrep}.
In the `Hamilton' polar form, negating the argument of the exponential negates the sine term,
which corresponds to the vector part of $q$, and thus yields the classical Hamilton conjugate.
In the `complex' polar form, negating the argument of the exponential again negates the sine term,
but in this case the sine term corresponds to the imaginary part of the quaternion,
and thus it is the complex conjugate that is obtained.
A third possibility would be a polar form that would correspond
(in the sense just outlined)
to the total conjugate.
However,
this would require the sine term to represent
all the components in equation \ref{biconjugate} except $w_r$,
which would be represented in the cosine term.
This appears improbable.

The three types of conjugate allow us to construct formulae for extracting the geometric
components of a biquaternion, as defined in Table \ref{correspondence}.
The formulae in Table \ref{geomconj} are obtained by substitution from four formulae for
the scalar/vector and real/imaginary parts of a quaternion based on sums and differences
of the Hamilton or complex conjugates.
Unlike the formulae in equation~\ref{eqn:sudbery} however,
which are based on involutions,
and therefore require multiplication and addition only,
these formulae contain complex conjugates,
which are primitive operations that cannot be expressed in terms of multiplications and additions.
Similarly, and fairly trivially,
by combining the formulae in equation~\ref{eqn:sudbery} with formulae based on
sums and differences of complex conjugates,
one can obtain formulae for extracting any of
the eight real components of a biquaternion\footnote{The utility of
such formulae is not in numeric computation,
where explicit access to the components is a much better approach,
but in algebraic manipulation, where the formulae may allow an
algebraic solution to an otherwise seemingly difficult derivation.}.
For example, that for the real part of the scalar part is
$w_r=\frac{1}{8}(q + \cconjugate{q}-\i q\i-\i\cconjugate{q}\i
                                   -\j q\j-\j\cconjugate{q}\j
                                   -\k q\k-\k\cconjugate{q}\k)$.
\begin{table}[htb!]
\begin{center}
\begin{tabular}{l@{\quad}l|@{\rule[-1.5ex]{0pt}{4ex}\hspace{1ex}}c}
\hline
Scalar & $\Re(\Scalar{q})$ &
$\frac{1}{4}\left(q + \qconjugate{q} + \cconjugate{q} + \cconjugate{\qconjugate{q}}\right)$\\
\hline
Bivector & $\Re(\Vector{q})$ &
$\frac{1}{4}\left(q - \qconjugate{q} + \cconjugate{q} - \cconjugate{\qconjugate{q}}\right)$\\
\hline
Vector & $\Im(\Vector{q})\I$ &
$\frac{1}{4}\left(q - \qconjugate{q} - \cconjugate{q} + \cconjugate{\qconjugate{q}}\right)$\\
\hline
Pseudoscalar & $\Im(\Scalar{q})\I$ &
$\frac{1}{4}\left(q + \qconjugate{q} - \cconjugate{q} - \cconjugate{\qconjugate{q}}\right)$\\
\hline
\end{tabular}
\caption{\label{geomconj}Formulae for the geometric components of a biquaternion.
                         (See also Table \ref{correspondence}.)}
\end{center}
\end{table}

\subsection{Inner Product}\label{sec:inner}
The definition of the inner product in the classic quaternion case was given by Porteous
\cite[Prop.\,10.8, p\,177]{Porteous:1981} and we take this as the basis for defining the
inner product of two biquaternions, since it is consistent with the quaternion case, and
also yields the semi-norm (see the next section) in the case of the inner product of a
biquaternion with itself.
The same formula is given by Synge \cite[Equation~3.8, p\,9]{Synge:1972} in the context of
a discussion of basic properties of biquaternions.
Ward, in his discussion of biquaternions and the
Minkowski metric \cite[§\,3.3, p114]{Ward:1997}, utilises the same definition, but he also
discusses other definitions of the inner product (see \cite[§\,3.2, p109]{Ward:1997}).
The definitions of Porteous and Synge are:
\begin{equation}\label{innerprod}
\inner{\,p}{q} = \frac{1}{2}\left(\,\qconjugate{p}\,q + \qconjugate{q}\,p\right)
               = \frac{1}{2}\left(p\,\qconjugate{q} + q\,\qconjugate{p}\right)
\end{equation}
where the overbar represents a (Hamilton or quaternion) conjugate.
The result of this expression will have zero vector part
(since the vector parts of the two terms inside the parentheses cancel).
In general, the scalar part of the result, and therefore the inner product, will be complex.
The inner product may also be
defined in terms of a simple elementwise product of the two quaternions, as can be
seen by expanding out equation \ref{innerprod}. If we let
$p = W_p + X_p\i + Y_p\j + Z_p\k$ and $q = W_q + X_q\i + Y_q\j + Z_q\k$, then
\[
\inner{\,p}{q} = W_p W_q + X_p X_q + Y_p Y_q + Z_p Z_q
\]
which is, of course, complex. (The result will be equal to the scalar part of the quaternion
with zero vector part resulting from equation \ref{innerprod}.)

As already discussed in §\,\ref{sec:complexq}, the use of the term \emph{inner product}
here does not imply that all the usual properties of an inner product will be satisfied.
Ward \cite[§\,3.3, p\,115]{Ward:1997} states the following properties that are satisfied
by the inner product defined in equation \ref{innerprod}, and these are easily verified:
\begin{align*}
\inner{p}{q}       &= \inner{q}{p},& p,q\in\B\\
\inner{p}{q+r}     &= \inner{p}{q}+\inner{p}{r},& r\in\B\\
\alpha\inner{p}{q} &= \inner{\alpha p}{q} = \inner{p}{\alpha q},&\alpha\in\C
\end{align*}
Conventionally, but not always \cite[See: \textbf{Scalar product}]{PenguinDictMaths3e},
the inner product is \emph{positive definite}, that is greater than or equal to zero,
or non-negative, which cannot apply to a complex-valued inner product.

Classically, the inner product of two vectors is given by $\modulus{v_1}\modulus{v_2}\cos\theta$
where $\theta$ is the angle between the two vectors.
If extended to quaternions,
it is not difficult to see that the angle is that between the two quaternions in 4-space,
in the common plane defined by the two quaternions.
When we extend this concept to biquaternions, the angle between them, and
their moduli, must be complex, in general, so the geometric interpretation
of the inner product is slightly more difficult.
In the quaternion case, orthogonality arises from the angle between the two
quaternions (having a vanishing cosine), but in the biquaternion case there
is the additional possibility that one or both moduli are zero, resulting in
a vanishing inner product.
Expressing the inner product of two biquaternions in terms of inner products
of the real and imaginary parts makes it possible to understand what the inner
product represents geometrically.
Representing $p$ as follows: $p = p_r + \I p_i$ and similarly for $q$,
the inner product may be expanded as:
\[
\inner{\,p}{q} = \inner{\,p_r}{q_r} - \inner{\,p_i}{q_i}
    +\I\left(\,\inner{\,p_r}{q_i} + \inner{\,p_i}{q_r}\,\right)
\]
Certain special cases are apparent after careful inspection:
\begin{itemize}
\item $\inner{p}{q} = 0$ indicates that the two biquaternions are orthogonal or
      `perpendicular', and it requires the real and imaginary parts of the inner
      product to vanish separately.
      The detailed conditions required for this are many, since the
      four inner product components (two real, two imaginary) may have positive or
      negative real values and can cancel in several ways.
      We list here four different ways in which the scalar product can vanish:
      \begin{description}
      \item[The strongest constraint] is when all four real and imaginary parts of
      the two biquaternions are mutually orthogonal.
      A simple example is $p = 1 + \I\i$ and $q = \j + \I\k$, but it is easy
      to construct a more general example starting from a random quaternion.
      Let $p_1$ be a randomly chosen (pure or full) quaternion.
      Then let $p_2 = p_1\i$, $p_3 = p_1\j$ and $p_4 = p_1\k$.
      The four quaternions thus constructed have the same four numeric components
      permuted with sign changes in such a way that any two will have a vanishing
      inner product\footnote{Three other permutations can be obtained by multiplication
      on the right, or by division on either side by $\i$, $\j$ and $\k$.}.
      Two biquaternions $p = p_1 + \I p_2$ and $q = p_3 +\I p_4$
      (or any other permutation)
      will have $\inner{p}{q} = 0$.
      This pair of biquaternions will be found to be divisors of zero (see §\,\ref{sec:divisors}),
      but scaling the real and imaginary parts with different scale factors will
      result in biquaternions that are not divisors of zero, but are still
      orthogonal (orthogonality not being dependent on norm).
      Choosing the initial random value to be pure does not result in the pair
      of biquaternions being pure, because of permutation of the components.
      \item[Weaker constraint I]\mbox{} 
      \begin{enumerate}
      \item The real parts of the two biquaternions are orthogonal, and
            the imaginary parts of the two biquaternions are orthogonal, hence
            the real part of the scalar product vanishes because it is the difference
            of two vanishing scalar products.
      \item The real part of each biquaternion is not orthogonal to the imaginary part
            of the other. However, the scalar products of the real part of one
            biquaternion with the imaginary part of the other have the same values, but
            \emph{opposite signs}.
            This means the imaginary part of the scalar product
            of the two biquaternions vanishes \emph{by cancellation}.
      \end{enumerate}
      A pair of biquaternions satisfying this constraint can be constructed from
      an arbitrary biquaternion $p$, by choosing $q = p\,\i$ (and two others also
      orthogonal to the first may be constructed by choosing $q = p\j$ or
      $q = p\k$)\footnote{As in
      the stronger case, it is also possible to multiply by \i on the left, \textit{etc}.}.
      \item[Weaker constraint II]\mbox{} 
      \begin{enumerate}
      \item The real parts of the two biquaternions are not orthogonal,
      and neither are the imaginary parts, but the scalar product of the two real
      parts has the same value \emph{and sign} as the scalar product of the two imaginary parts.
      This means the real part of the scalar product of the two biquaternions
      vanishes \emph{by cancellation}.
      \item The real part of each biquaternion is orthogonal to the imaginary part
      of the other, hence the imaginary part of the
      scalar product of the two biquaternions vanishes because it is the sum of
      two vanishing scalar products.
      \end{enumerate}
      A pair of biquaternions satisfying this constraint can be constructed from
      an arbitrary biquaternion $p$, by choosing $q = p\,\I\i$ (and ditto,
      \textit{mutatis mutandis.}).
      \item[The weakest constraint] is when none of the real and imaginary parts
      of the two biquaternions are orthogonal, but the real and imaginary parts of
      the scalar product of the two biquaternions vanish by cancellation.
      A pair of biquaternions satisfying this constraint can be constructed
      from an arbitrary biquaternion $p$, by choosing $q = p (\i + \I\j)$ or similar.
      Note that this results in $q$ being a divisor of zero (see §\,\ref{sec:divisors})
      because $(\i + \I\j)$ is a divisor of zero.
      \end{description}
\item $\inner{p}{q}$ is real, that is with zero imaginary part.
      There is a trivial case: the two biquaternions may be imaginary with
      zero real parts. Otherwise, 
      the imaginary part of the inner product can vanish in two ways,
      as in the weaker cases above of $\inner{p}{q}=0$:
      either the imaginary part vanishes by cancellation,
      or it vanishes because the real part of each biquaternion is orthogonal to
      the imaginary part of the other.
      It is easy to construct a biquaternion to satisfy the second condition
      using the technique outlined above for the strongest constraint: construct
      two pairs of orthogonal quaternions, and then construct two biquaternions
      using the components of these pairs.
\item $\inner{p}{q}$ is imaginary, that is with zero real part.
      The conditions required for this are very similar to the previous case,
      with the appropriate changes.
      A trivial case is where one biquaternion is real and the other is imaginary.
\end{itemize}
Clearly, from the above analysis, \emph{orthogonality} of biquaternions is not
as simple as orthogonality of real quaternions where a geometrical interpretation
in terms of a plane and a real angle is straightforward.
In the biquaternion case, interpreting the inner product $\inner{p}{q}$ as
$\modulus{p}\modulus{q}\cos\Theta$, where the two moduli and the angle are
complex, is not at all obvious.
This remains a topic for further work.

\subsection{Semi-norm}\label{sec:seminorm}
It is possible to define more than one norm for the biquaternions
(see for example \cite[§§\,3.2 and 3.3]{Ward:1997}).
In this section we consider the generalization of the quaternion norm
$\norm{q} = w^2 + x^2 + y^2 + z^2,\;\norm{q}\in\R$ to the biquaternion
case by allowing the four Cartesian components to become complex.
Conventionally a \emph{norm} is real and positive definite, that is non-negative,
but in the case of biquaternions, this convention must be relaxed because the
norm has a complex value.
Additionally, the existence of divisors of zero (see §\,\ref{sec:divisors}) means that a non-zero
biquaternion can have a vanishing norm.
For this reason, we use the term \emph{semi-norm}.
A semi-norm is a generalization of the concept of a norm, with no requirement that the norm be
zero only at the origin
(of a vector space) \cite[See: \textbf{semi-norm}]{CollinsDictMaths}.
As with our use of the term \emph{inner product} in §\,\ref{sec:inner}
for a complex-valued generalization of the inner product,
we are here extending the term \emph{semi-norm} to a complex-valued quantity analogous
to a norm, with the additional property of vanishing in the case of divisors of zero.
We also use the term \emph{modulus} for the square root of the semi-norm -- it is also
of course complex, in general.

The semi-norm can be real and negative in special cases, as well as purely imaginary.
The semi-norm can be defined in terms of the inner product of a biquaternion with itself,
that is \norm{q} = \inner{q}{q}
or directly in terms of the four complex components:
$\norm{q} = \modulus{q}^2 = W^2 + X^2 + Y^2 + Z^2$.
Although the semi-norm is complex-valued
the result given by Coxeter for quaternions \cite[§\,2]{Coxeter:1946} still holds:
$\norm{q} = \modulus{q}^2 = q\,\qconjugate{q} = \qconjugate{q}\,q$.
This is, of course, a special case of the formula for the inner product,
in this case of $q$ with itself (equation \ref{innerprod}).

\begin{lemma}\textnormal{\cite[Lemma 1]{arXiv:0812:1102,10.1007/s00006-xxxx-xxxx-x}}\label{normlemma}
Let $q = q_r + \I q_i$ be a non-zero biquaternion with real part $q_r\in\H$ and imaginary part $q_i\in\H$.
The real part of $\norm{q}$ is equal to the difference between the norms of $q_r$ and $q_i$,
and the imaginary part of $\norm{q}$ is equal to twice the inner product of $q_r$ and $q_i$.
\end{lemma}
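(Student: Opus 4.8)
The plan is to work in `complex' form II, writing $q = q_r + \I q_i$ with $q_r, q_i\in\H$, and to compute the semi-norm directly from the identity $\norm{q} = q\qconjugate{q}$ established at the end of this section. First I would record the quaternion conjugate in this representation, namely $\qconjugate{q} = \qconjugate{q_r} + \I\qconjugate{q_i}$, which is exactly equation \ref{qconjugate}. Substituting into $q\qconjugate{q}$ and multiplying out yields four terms.

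Second, I would simplify using the two axiomatic facts that \I commutes with every quaternion and that $\I^2 = -1$. This collapses the product to
\[
\norm{q} = q_r\qconjugate{q_r} - q_i\qconjugate{q_i} + \I\left(q_r\qconjugate{q_i} + q_i\qconjugate{q_r}\right).
\]
The two terms not carrying a factor of \I are each of the form `a quaternion times its own conjugate', so by the quaternion norm formula $q_r\qconjugate{q_r} = \norm{q_r}$ and $q_i\qconjugate{q_i} = \norm{q_i}$. Hence $\Re(\norm{q}) = \norm{q_r} - \norm{q_i}$, which is the first claim.

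Third, for the imaginary part I would recognise the bracket $q_r\qconjugate{q_i} + q_i\qconjugate{q_r}$ as exactly twice the inner product $\inner{q_r}{q_i}$ defined in equation \ref{innerprod}, taking $p = q_r$ and $q = q_i$ in the second of the two equivalent forms given there. This gives $\Im(\norm{q}) = 2\inner{q_r}{q_i}$, the second claim, completing the verification.

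The one point requiring care --- and the only place where non-commutativity could bite --- is that neither $q_r\qconjugate{q_i}$ nor $q_i\qconjugate{q_r}$ is individually a scalar, so one must resist simplifying them separately. What rescues the computation is precisely the property noted below equation \ref{innerprod}: the vector parts of the two summands cancel, so their sum is a (real) scalar. This guarantees that $\norm{q}$ is a genuine complex scalar with the stated real and imaginary parts, rather than a biquaternion retaining a surviving vector part. I expect this to be the only genuine obstacle; everything else is routine expansion and identification against definitions already in hand.
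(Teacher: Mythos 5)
Your proposal is correct and follows essentially the same route as the paper's own proof: expand $\norm{q}=q\qconjugate{q}$ using $\qconjugate{q}=\qconjugate{q_r}+\I\qconjugate{q_i}$, identify the real part as $\norm{q_r}-\norm{q_i}$ via $x\qconjugate{x}=\norm{x}$, and identify the bracket multiplying $\I$ as $2\inner{q_r}{q_i}$ from equation \ref{innerprod}. Your closing remark --- that neither $q_r\qconjugate{q_i}$ nor $q_i\qconjugate{q_r}$ is individually a scalar but their sum is, by cancellation of vector parts --- is a point the paper leaves implicit, and it is a worthwhile addition.
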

\begin{proof}
We express the semi-norm of $q$ as the product of $q$ with its quaternion conjugate:
$\norm{q} = q\qconjugate{q}$. Writing this explicitly:
\begin{align*}
\norm{q} &= (q_r + \I q_i)\qconjugate{(q_r + \I q_i)} = (q_r + \I q_i)(\qconjugate{q_r} + \I\qconjugate{q_i})\\
\intertext{and multiplying out we get:}
\norm{q} &= q_r\qconjugate{q_r} - q_i\qconjugate{q_i} + \I(q_r\qconjugate{q_i} + q_i\qconjugate{q_r})
\end{align*}
The real part of \norm{q} can be recognised as $\norm{q_r} - \norm{q_i}$
from Coxeter's result \cite[§\,2]{Coxeter:1946}.
The imaginary part of \norm{q} is twice the inner product of $q_r$ and $q_i$ as given by
Porteous \cite[Prop.\,10.8, p\,177]{Porteous:1981} and equation \ref{innerprod}.
\end{proof}
There are some special cases of the result given in Lemma \ref{normlemma}:
\begin{itemize}
\item biquaternions with perpendicular real and imaginary parts have a real semi-norm
      (because the imaginary part vanishes), but the semi-norm may be negative
      if the norm of the imaginary part exceeds the norm of the real part;
\item biquaternions with real and imaginary parts with equal norms have an imaginary semi-norm
      (because the real part vanishes).
\item imaginary biquaternions (with $q_r = 0$) have a negative real semi-norm,
      and therefore an imaginary modulus.
\item biquaternions with perpendicular real and imaginary parts with equal norms
      have a vanishing semi-norm. This important special case is covered in more
      detail in §\,\ref{sec:divisors}.
\end{itemize}

The semi-norm is invariant under quaternion conjugation, that is $\norm{q}=\norm{\qconjugate{q}}$,
but not under complex conjugation: $\norm{p}=\cconjugate{\norm{\cconjugate{p}}}$.
This can be seen easily from Lemma \ref{normlemma}, since quaternion conjugation
does not affect the norms of the real and imaginary parts, nor their inner product.
However, although complex conjugation does not alter the norms of the real and imaginary parts,
it does negate their inner product and therefore negates the imaginary part of the semi-norm.

The semi-norm as defined here
(and by Ward \cite[§\,3.3, p\,115]{Ward:1997})
obeys the `rule of the norms'\footnote{The `rule of the norms' holds also for the modulus
(the square root of the norm), which is what the term `norm' refers to in many sources in
the literature.} so that for any two biquaternions,
the semi-norm of their product equals the product of their semi-norms:
$\norm{p\,q}=\norm{p}\norm{q}$.
This is true even in the case where one or both of $p$ and $q$ is a divisor of zero,
with vanishing semi-norm,
as discussed in §\,\ref{sec:divisors}
(but the result will be zero, computed as the semi-norm of the product,
or the product of the semi-norms).
Hence $\B$ is a \emph{normed algebra} using the definition of the semi-norm
used in this section.
The non-zero biquaternions that have zero semi-norm
(the \emph{divisors of zero})
lack a multiplicative inverse.

A \emph{norm} is usually required to satisfy the following three properties
\cite[See: \textbf{norm}]{CollinsDictMaths} (caution\footnote{The properties given in
equation \ref{eqn:normprops} are stated using the notation used in the
cited reference which does not conform to the usage elsewhere in this paper.
In particular, $\norm{x}$ in equation \ref{eqn:normprops} means what we call
the modulus or square root of the norm in this paper.}):
\begin{equation}
\label{eqn:normprops}
\begin{aligned}
\norm{-x}         &= \norm{x}\\
\norm{\lambda\,x} &= \modulus{\lambda} \norm{q}\\
\norm{x + y}      &\le\norm{x} + \norm{y}\quad\text{(Triangle inequality)}
\end{aligned}
\end{equation}
The first of these properties is easily seen to be satisfied for all $q\in\B$
because negating a complex number does not change its square, and therefore
does not alter the sum of the squares of the four components of a biquaternion,
nor the square root of the sum of the squares.
The second property requires rather more careful analysis, but when the
notation is correctly interpreted, it can be seen to be satisfied by the
square root of the semi-norm of a biquaternion. The quantity $\lambda$ is
usually taken to be a scalar, that is in the biquaternion context,
a complex number.
If we write the expression explicitly in biquaternion terms,
using the Cartesian form, we have:
$\modulus{\lambda q}=\sqrt{(\lambda W)^2 + (\lambda X)^2 +
                           (\lambda Y)^2 + (\lambda Z)^2}
                    =\sqrt{\lambda^2}\modulus{q}$.
Taking the square root of the square of a scalar quantity conventionally
yields the positive square root, or absolute value.
In the case of a complex $\lambda$, taking the square root of the square
yields a result in the right half-plane, rather than the absolute value.
Thus to decide whether the biquaternion semi-norm satisfies the property
as conventionally stated requires some deeper understanding of the reason
for the property. 
The triangle inequality, as has already been noted in §\,\ref{sec:complexq}
is not applicable to the biquaternion semi-norm, because ordering is not
defined for complex numbers.

Finally we note that it is possible to define a real norm and modulus for the biquaternions.
Gürlebeck and Sprössig \cite[Lemma 1.30]{Gurlebeck} state that there is a unique real norm
satisfying $\norm{pq} = \norm{p}\,\norm{q}$ and $\norm{\oldmu\sigma_0} = \modulus{\oldmu}, \oldmu\in\R$
where $\sigma_0$ is a basis element (Pauli matrix).
This norm is simply the absolute value of the square root of the semi-norm,
in other words,
$r$ in equations \ref{hamiltonpolar} and \ref{complexpolar},
or $\modulus{Q}$ in Table~\ref{biquatrep}.

\subsection[Roots of -1]{Roots of $-1$}
\label{sec:roots}
This section draws on and builds on results first presented by Sangwine in
\cite{arXiv:math.RA/0506190,10.1007/s00006-006-0005-8}.
These results have been generalized to Clifford algebras by Hitzer and Ab{\l}amovicz
\cite{Hitzer:TR_2009_3,arXiv:0905.3019} where the concept is aptly referred to as
\emph{geometric roots} of $-1$.

Biquaternion roots of -1 appear as $\xi$ in Table \ref{biquatrep}, in the `complex'
and `Hamilton polar' forms.
\begin{theorem}\label{unitnorm}
Any non-zero quaternion or biquaternion with a non-vanishing modulus,
divided by its modulus has a unit real norm.
\end{theorem}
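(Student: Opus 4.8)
The plan is to reduce the statement to unwinding two definitions: that the modulus satisfies $\modulus{q}^2 = \norm{q}$, and that the modulus is a \emph{scalar} --- a complex number for a biquaternion, or a real number for a quaternion --- and hence commutes with the quaternion units \i, \j, \k and with \I. First I would fix $q\in\B$ non-zero with $\modulus{q}\ne 0$ (the quaternion case in \H being a specialisation, where non-zero already forces $\modulus{q}\ne 0$), and set $\hat q = q/\modulus{q}$. Because $\modulus{q}=\sqrt{\norm{q}}$ is a non-zero complex scalar, it has an inverse in \C, so $\hat q$ is well defined; this is the only point at which the non-vanishing-modulus hypothesis is used, and it is precisely what excludes the divisors of zero.

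Next I would compute $\norm{\hat q}$. The quickest route invokes the rule of the norms established in Section~\ref{sec:seminorm}: $\norm{q\,\modulus{q}^{-1}} = \norm{q}\,\norm{\modulus{q}^{-1}}$. Since $\modulus{q}^{-1}$ is a scalar, its semi-norm is simply its square, $\norm{\modulus{q}^{-1}} = \modulus{q}^{-2} = \norm{q}^{-1}$, so that $\norm{\hat q} = \norm{q}\,\norm{q}^{-1} = 1$. Alternatively, and even more elementarily, writing $q = W + X\i + Y\j + Z\k$ and dividing each complex component by the scalar $\modulus{q}$ gives $\norm{\hat q} = (W^2 + X^2 + Y^2 + Z^2)/\modulus{q}^2 = \norm{q}/\norm{q} = 1$ directly from the component definition of the semi-norm together with $\modulus{q}^2 = \norm{q}$. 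Either way the semi-norm of the normalised (bi)quaternion is the real number $1$; that is, $\hat q$ has a unit real norm, as claimed.

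I expect no genuine obstacle here --- in the spirit of Synge's remark quoted in the introduction, the identity is immediately verifiable once the definitions are aligned. The only points deserving a sentence of care are that $\modulus{q}^2 = \norm{q}$ holds for \emph{either} branch of the complex square root, so the otherwise ambiguous choice of branch does not affect the conclusion; and that the non-vanishing-modulus hypothesis is indispensable, since a non-zero divisor of zero has $\norm{q} = 0$, hence no invertible modulus, and the normalisation breaks down exactly where the semi-norm degenerates.
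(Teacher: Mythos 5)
Your proof is correct, but both of your routes differ in detail from the paper's own. The paper proves the theorem by direct computation with the conjugate-product form of the semi-norm: writing $p = q/\sqrt{q\,\qconjugate{q}}$, it evaluates $\norm{p}=p\,\qconjugate{p}$, the single key observation being that the complex scalar $\sqrt{q\,\qconjugate{q}}$ is unaffected by the quaternion conjugate (and commutes with everything), so that $\qconjugate{p}=\qconjugate{q}/\sqrt{q\,\qconjugate{q}}$ and the product collapses to $q\,\qconjugate{q}/\left(q\,\qconjugate{q}\right)=1$. Your component-wise argument is at the same elementary level and is essentially a rewriting of this: dividing each of $W,X,Y,Z$ by the scalar $\modulus{q}$ scales the sum of squares by $\modulus{q}^{-2}=\norm{q}^{-1}$, needing nothing beyond the definition $\norm{q}=W^2+X^2+Y^2+Z^2$ and scalar commutativity. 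Your first argument, via the rule of the norms $\norm{p\,q}=\norm{p}\,\norm{q}$ together with $\norm{\lambda}=\lambda^2$ for $\lambda\in\C$, is logically sound within the paper's narrative (the rule is stated in Section~\ref{sec:seminorm}, before this theorem), but note that the paper asserts that rule without proof, and its verification for biquaternions is precisely the same conjugate manipulation the paper uses to prove this theorem; so that route buys brevity and generality (any scalar rescaling multiplies the semi-norm by the square of the scalar) at the cost of resting on a strictly stronger, unproved statement. Two details you make explicit that the paper leaves implicit are welcome: that either branch of the complex square root satisfies $\modulus{q}^2=\norm{q}$, so the conclusion is branch-independent, and that the non-vanishing-modulus hypothesis (automatic for non-zero real quaternions) is used exactly once, to invert $\modulus{q}$, and is what excludes the divisors of zero, for which normalisation is impossible.
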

\begin{proof}
Let $q$ be an arbitrary quaternion or biquaternion and let $p = q/\modulus{q}$.
For any quaternion or biquaternion $x$ we have $\norm{x}=x\qconjugate{x}$ and $\modulus{x}=\sqrt{\norm{x}}$.
Therefore $p = q/\sqrt{q\qconjugate{q}}$ and we can write the norm of $p$ as:
\begin{align*}
\norm{p} &= p\,\qconjugate{p} = \frac{q}{\sqrt{q\,\qconjugate{q}}}
                              \qconjugate{\left(\frac{q}{\sqrt{q\,\qconjugate{q}}}\right)}\\
\intertext{Since $\sqrt{q\,\qconjugate{q}}$ is complex, it is unaffected by the quaternion conjugate
           and we can simplify this to:}
         &= \frac{q}{\sqrt{q\,\qconjugate{q}}}\frac{\qconjugate{q}}{\sqrt{q\,\qconjugate{q}}}
          = \frac{q\,\qconjugate{q}}{q\,\qconjugate{q}} = 1
\end{align*}
\end{proof}
\begin{theorem}\label{rootsofunity}
Any pure quaternion or biquaternion with a non-vanishing modulus,
divided by its modulus, is a root of $-1$.
\end{theorem}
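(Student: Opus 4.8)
The plan is to leverage Theorem~\ref{unitnorm} together with the elementary observation that the quaternion conjugate of a pure quaternion or biquaternion is simply its negation. First I would set $p = q/\modulus{q}$, exactly as in the proof of Theorem~\ref{unitnorm}. Since $q$ is pure we have $\Scalar{q}=0$, and because $\modulus{q}$ is a complex scalar that commutes with everything and scales each component uniformly, $p$ is also pure: $\Scalar{p}=0$ and $p = \Vector{p}$.

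Next I would invoke equation~\ref{qconjugate}, which expresses the quaternion conjugate as $\qconjugate{p} = \Scalar{p} - \Vector{p}$. For a pure $p$ this collapses to $\qconjugate{p} = -\Vector{p} = -p$. The decisive step is then to write the semi-norm of $p$ in the form $\norm{p} = p\,\qconjugate{p}$, which holds for biquaternions as established in \S\ref{sec:seminorm} (Coxeter's result), and to substitute the conjugate: $\norm{p} = p(-p) = -p^2$. Finally I would apply Theorem~\ref{unitnorm}, which guarantees that $p$ has unit \emph{real} norm, $\norm{p}=1$. Combining these gives $-p^2 = 1$, hence $p^2 = -1$, so $p$ is a root of $-1$ as claimed.

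I expect no serious obstacle; the argument is short and purely algebraic, resting entirely on purity forcing $\qconjugate{p} = -p$. The one point deserving care is that I rely on the identity $\norm{p} = p\,\qconjugate{p}$ in the complex-norm (biquaternion) setting rather than merely the real case, but this was already secured in \S\ref{sec:seminorm}. It is worth remarking that this conjugate-based route is markedly cleaner than verifying consistency with the explicit characterization $\alpha=\gamma=0$, $\mu\perp\nu$, $\beta^2-\delta^2=1$ obtained via equation~\ref{dcomplex}: that approach would require unpacking the full representation, whereas the present proof needs only that a vanishing scalar part makes conjugation coincide with negation.
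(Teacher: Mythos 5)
Your proof is correct and follows essentially the same route as the paper's: both arguments rest on the identity $\norm{p}=p\,\qconjugate{p}$ together with the observation that purity makes the quaternion conjugate a negation, so that $p^2=-p\,\qconjugate{p}=-1$. The only cosmetic difference is that you obtain $\norm{p}=1$ by invoking Theorem~\ref{unitnorm}, whereas the paper performs the equivalent cancellation inline, writing $\left(q/\modulus{q}\right)^2 = q^2/q\,\qconjugate{q} = q/\qconjugate{q} = -1$.
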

\begin{proof}
Let $q$ be an arbitrary quaternion or biquaternion with $\modulus{q}\ne 0$.
Then $\left(q/\modulus{q}\right)^2 = q^2/q\,\qconjugate{q} = q/\qconjugate{q}$.
If we restrict $q$ to have zero scalar part, the conjugate reduces to negation and we have
$\left(q/\modulus{q}\right)^2 = q/-q = -1$.
\end{proof}
Theorem \ref{rootsofunity} is simply a restatement of the well-known fact that unit pure
quaternions are roots of $-1$ \cite[§\,167, p179]{Hamilton:1853},
but in the case of biquaternions,
the ramifications of the result are not so obvious.
Theorem \ref{rootsofunity} is also easily demonstrated using the Cartesian form:
\begin{align}
\left(\frac{X\i+Y\j+Z\k}{\sqrt{X^2+Y^2+Z^2}}\right)^2 &=
\frac{-X^2 + XY(\i\j+\j\i) + XZ(\i\k+\k\i) -Y^2 + YZ(\j\k+\k\j) -Z^2}{X^2+Y^2+Z^2}\nonumber\\
\label{simpleroot}
&= \frac{-\left(X^2+Y^2+Z^2\right)}{X^2+Y^2+Z^2} = -1
\end{align}

It was shown in \cite[Theorem 2.1]{10.1007/s00006-006-0005-8}\footnote{Also in 
                \cite[Theorem 1]{arXiv:math.RA/0506190}.} that any pure biquaternion
$\xi$ satisfying the constraints $\Re(\xi)\perp\Im(\xi)$ and $\norm{\Re(\xi)}-\norm{\Im(\xi)}=1$ is
a root of $-1$ and that no other biquaternions are roots of $-1$.
It follows therefore that dividing an arbitrary pure biquaternion by its
(complex) modulus produces a result that satisfies the constraints stated.
Since these constraints are by no means obvious from equation \ref{simpleroot},
it is interesting to verify them directly.
To verify the constraints stated, we take an arbitrary pure biquaternion $p$,
divide it by its (complex) modulus and show that the result satisfies the constraints stated.
Dividing $p$ by its modulus gives a root of $-1$: $\xi = p/\modulus{p}$.
We have to show that the real and imaginary parts of $\xi$ are orthogonal,
that is $\inner{\Re(\xi)}{\Im(\xi)}=0$,
and also that the difference between the norms of the real and imaginary
parts of $\xi$ is $1$.
We can express the real and imaginary parts of $\xi$ as follows,
by taking the sum and difference of $\xi$ with its complex conjugate:
\[
\Re(\xi) = \frac{1}{2}(\xi + \cconjugate{\xi}),\qquad\Im(\xi) = \frac{1}{2\I}(\xi - \cconjugate{\xi})
\]
The inner product of two quaternions is given by equation \ref{innerprod},
but since we are dealing with pure quaternions,
the conjugates reduce to negation, and the inner product simplifies to:
\begin{equation}
\label{simpleinner}
\inner{u}{v} = -\frac{1}{2}\left(uv + vu\right)
\end{equation}
Substituting the results above for the real and imaginary parts of $\xi$,
we obtain the following:
\begin{align*}
\inner{\Re(\xi)}{\Im(\xi)}
&=-\frac{1}{2}
\left[
\frac{1}{2}\left(\xi+\cconjugate{\xi}\right)
\frac{1}{2\I}\left(\xi-\cconjugate{\xi}\right)
+
\frac{1}{2\I}\left(\xi-\cconjugate{\xi}\right)
\frac{1}{2}\left(\xi+\cconjugate{\xi}\right)
\right]\\
&=-\frac{1}{8\I}
\left[
\left(\xi+\cconjugate{\xi}\right)\left(\xi-\cconjugate{\xi}\right) +
\left(\xi-\cconjugate{\xi}\right)\left(\xi+\cconjugate{\xi}\right)
\right]\\
&=-\frac{1}{8\I}
\left[\xi^2 + \cconjugate{\xi}\xi - \cconjugate{\xi}^2 - \xi\cconjugate{\xi}
   +  \xi^2 - \cconjugate{\xi}\xi - \cconjugate{\xi}^2 + \xi\cconjugate{\xi}
\right]
=-\frac{1}{4\I}\left[\xi^2 -\cconjugate{\xi}^2\right]
\end{align*}
Since $\xi$ is a root of $-1$ by Theorem \ref{rootsofunity}, its complex conjugate must also be a
root of minus one\footnote{This is easily shown using the `complex' form $\xi = \xi_r +\I\xi_i$,
since the imaginary part must be zero in both $\xi^2$ and $\cconjugate{\xi}^2$ if they
are each equal to $-1$.}, thus
$\xi^2=\cconjugate{\xi}^2=-1$ and therefore $\inner{\Re(\xi)}{\Im(\xi)}=0$ as expected.

To show that the real and imaginary parts of $\xi$ have norms differing by $1$,
we can use a similar approach.
Starting with the fact that the norm is the inner product of a quaternion with itself,
as stated in §\,\ref{sec:seminorm},
the formula in equation \ref{simpleinner} simplifies to $\norm{u}=-u^2$ when $u$ is pure and $v=u$.
Thus the difference between the norms of the real and imaginary parts of $\xi$ is given by:
\begin{align*}
\norm{\Re(\xi)}-\norm{\Im(\xi)}=-\Re(\xi)^2+\Im(\xi)^2
&=-\left[\frac{1}{2}\left(\xi + \cconjugate{\xi}\right)\right]^2
  +\left[\frac{1}{2\I}\left(\xi - \cconjugate{\xi}\right)\right]^2\\
&=-\frac{1}{4}\left[\xi^2 + \cconjugate{\xi}^2 + \xi\cconjugate{\xi} + \cconjugate{\xi}\xi\right]
  -\frac{1}{4}\left[\xi^2 + \cconjugate{\xi}^2 - \xi\cconjugate{\xi} - \cconjugate{\xi}\xi\right]\\
&=-\frac{1}{2}\left[\xi^2 + \cconjugate{\xi}^2\right]
\end{align*}
which is $1$ because $\xi$ and its conjugate are roots of $-1$.

\subsection{Divisors of zero}
\label{sec:divisors}
It is possible for the semi-norm, $\norm{q}$, of a biquaternion to be zero,
even though $q\ne0$.
In modern terminology, the biquaternions with vanishing semi-norm
are better known as \emph{divisors of zero}.
The `rule of the norms' given in §\,\ref{sec:seminorm} shows that multiplying an
arbitrary biquaternion by a divisor of zero (with vanishing semi-norm)
yields a result which is also a divisor of zero (with vanishing semi-norm).

The conditions for the semi-norm to vanish were discovered by Hamilton
\cite[Lecture VII, §\,672, p669]{Hamilton:1853}\footnote{In
\cite[Lecture VII, §\,673, p671]{Hamilton:1853},
Hamilton rather quaintly refers to an \emph{evanescent tensor}
--- a tensor being what we here call a semi-norm.}
--- if we represent a non-zero biquaternion in the form:
$q = q_r + \I q_i$ then its norm is zero iff
$\norm{q_r} = \norm{q_i}$ and $\inner{q_r}{q_i}=0$,
that is, the real and imaginary parts must have equal norms and be perpendicular in 4-space.
This result follows easily from Lemma \ref{normlemma} in §\,\ref{sec:seminorm}.
The conditions stated are equivalent to the condition $W^2+X^2+Y^2+Z^2=0$,
which can be expanded into a pair of simultaneous equations
each equivalent to one of the two conditions above,
by equating the real and imaginary parts to zero separately
\cite[Proposition 1]{arXiv:0812:1102,10.1007/s00006-xxxx-xxxx-x}.
Notice that the conditions required for a vanishing semi-norm may be satisfied in
the case of a pure biquaternion (the real and imaginary components are then perpendicular in 3-space).
It follows from these conditions that any non-zero quaternion
or any non-zero imaginary biquaternion must have a non-zero norm,
since the constraint of real and imaginary parts having equal norm is violated.

Sangwine and Alfsmann have shown in \cite{arXiv:0812:1102,10.1007/s00006-xxxx-xxxx-x}
that every biquaternion divisor of zero is one of:
\begin{itemize}
\item an idempotent as defined in the next section in Theorem \ref{idempotents};
\item a complex multiple of an idempotent (dividing the divisor of zero by
      twice its scalar part yields an idempotent);
\item a nilpotent as defined in §\,\ref{sec:nilpotents}.
\end{itemize}

\subsubsection[Idempotents]{Idempotents}
\label{sec:idempotents}
An idempotent is a value that squares to give itself. An example of a biquaternion idempotent is
given in \cite{arXiv:physics/0508036} (quoting a paper of Lanczos) but no general case is given.
We present here a derivation of the set of idempotents in \B,
drawing on results first presented in \cite{arXiv:0812:1102,10.1007/s00006-xxxx-xxxx-x} by Sangwine and Alfsmann.

\begin{theorem}\textnormal{\cite[Theorem 2]{arXiv:0812:1102,10.1007/s00006-xxxx-xxxx-x}}\label{idempotents}
Any biquaternion of the form $q = \frac{1}{2}\pm\frac{1}{2}\xi\I$, where $\xi\in\B$ is a root of $-1$,
is an idempotent. There are no other idempotents in \B.
\end{theorem}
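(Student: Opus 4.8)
The plan is to prove the two halves separately: first that every $q=\tfrac12\pm\tfrac12\xi\I$ with $\xi^2=-1$ is idempotent (a one-line verification), and then that these exhaust the (non-trivial) idempotents by solving $q^2=q$ directly in the scalar-plus-vector representation of Table~\ref{biquatrep}. For the forward direction I would simply square: writing $q=\tfrac12(1\pm\xi\I)$ and using that $\I$ is central (it commutes with $\i,\j,\k$, hence with every biquaternion $\xi$), one gets $(\xi\I)^2=\xi^2\I^2=(-1)(-1)=1$, so $q^2=\tfrac14\bigl(1\pm2\xi\I+(\xi\I)^2\bigr)=\tfrac14(2\pm2\xi\I)=\tfrac12(1\pm\xi\I)=q$. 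This uses only $\xi^2=-1$ and the centrality of $\I$, both available from the discussion preceding the theorem.

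For the converse, write an arbitrary biquaternion as $q=\Scalar q+\Vector q$, abbreviating $s=\Scalar q\in\C$ and $\v v=\Vector q$, a pure biquaternion. The structural fact that makes everything decouple is that a pure biquaternion squares to a complex scalar, $\v v^2=-\norm{\v v}$ (exactly the cancellation of cross terms seen in equation~\ref{simpleroot}). Hence $q^2=(s^2-\norm{\v v})+2s\,\v v$, and equating scalar and vector parts of $q^2=q$ yields
\[
s^2-\norm{\v v}=s,\qquad (2s-1)\,\v v=0 .
\]

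Because a nonzero complex number is central and invertible in $\B$, the second equation forces $\v v=0$ or $s=\tfrac12$. The branch $\v v=0$ returns $s^2=s$, i.e.\ the trivial idempotents $q=0$ and $q=1$. In the branch $s=\tfrac12$ the first equation gives $\norm{\v v}=-\tfrac14$, equivalently $\v v^2=\tfrac14$. To recognise this as the stated family, set $\xi=-2\I\,\v v$; centrality of $\I$ gives $\xi\I=-2\I^2\v v=2\v v$ and $\xi^2=4\I^2\v v^2=4(-1)\tfrac14=-1$, while $\xi$ is pure because $\v v$ is. By the classification quoted just before the theorem (every pure biquaternion squaring to $-1$ is a root of $-1$), $\xi$ is a genuine root of $-1$, and $q=\tfrac12+\v v=\tfrac12+\tfrac12\xi\I$; the opposite sign of $\xi$ produces the $-$ branch, so the family is exactly recovered.

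The calculation is routine once the pure-part identity $\v v^2=-\norm{\v v}$ is in hand; the two places most likely to trip me up are precisely the reductions that drive the argument. First, the split of $(2s-1)\v v=0$ into $s=\tfrac12$ or $\v v=0$ genuinely needs the invertibility and centrality of nonzero complex scalars in $\B$ (a factor cannot be cancelled otherwise). Second, the back-translation $\v v^2=\tfrac14\mapsto\xi=-2\I\v v$ is what ties the purely scalar condition $\norm{\v v}=-\tfrac14$ to the cited root-of-$-1$ classification, and it must be checked that the resulting $\xi$ is both pure and a square root of $-1$. I would also flag explicitly that $q=0$ and $q=1$ emerge as the degenerate $\v v=0$ case and are not of the displayed form, so that ``no other idempotents'' is to be read as ``no other non-trivial idempotents.''
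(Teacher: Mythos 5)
Your proof is correct, and it takes a genuinely different route from the paper's, in a way worth comparing. The paper proves the converse by writing an arbitrary biquaternion in complex form I, $q = A + \xi B$ with $A,B\in\C$ and $\xi$ a root of $-1$, then equating coefficients of $1$ and $\xi$ in $q^2=q$ to get $A=A^2-B^2$ and $B=2AB$, whence $A=\tfrac12$ and $B=\pm\I/2$. You instead work in the scalar-plus-vector decomposition $q=s+\v{v}$, use the identity $\v{v}^2=-\norm{\v{v}}$, and construct the axis $\xi=-2\I\,\v{v}$ only at the end, in the branch where it is guaranteed to exist. The two routes yield essentially the same pair of equations, but yours is the more watertight: the representation $q=A+\xi B$ presupposes that the vector part is a complex multiple of a root of $-1$, which fails precisely when $\Vector{q}$ is a nonzero nilpotent ($\norm{\Vector{q}}=0$ but $\Vector{q}\ne 0$), a possibility the paper itself acknowledges in its discussion of complex form I and in \S\,\ref{sec:nilpotents}, and which its proof of this theorem silently omits. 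Your version disposes of that case uniformly, since the branch $s=\tfrac12$ forces $\norm{\v{v}}=-\tfrac14\ne 0$, the branch $\v{v}=0$ covers the rest, and no division by $\norm{\v{v}}$ or its square root occurs anywhere; your explicit check of the forward direction is also something the paper leaves implicit in its proof by construction. One small correction to your closing remark: you need not weaken the statement to ``no other non-trivial idempotents.'' The theorem allows any $\xi\in\B$ with $\xi^2=-1$, and the paper notes immediately after the proof that $\pm\I$ are (trivial) roots of $-1$; taking $\xi=\pm\I$ in the displayed form gives exactly $q=0$ and $q=1$, so your degenerate branch $\v{v}=0$ already falls under the stated form and the theorem is literally true as written.
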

\begin{proof}
The proof is by construction from an arbitrary biquaternion represented in the form
$q = A + \xi B$ ($A, B \in\C$). Squaring and equating to $q$ gives:
\[
q^2 = A^2 - B^2 + 2 A B \xi = A + \xi B = q
\]
Equating coefficients of $1$ and $\xi$ (that is, equating the real and imaginary parts with
respect to $\xi$) gives:
\begin{align*}
A &= A^2 - B^2\\
B &= 2 A B
\end{align*}
The second of these equations requires that $A = \frac{1}{2}$.
Making this substitution into the first equation gives:
$\frac{1}{2} = \frac{1}{4} - B^2$, from which $B^2 = -\frac{1}{4}$. Since $B$ is complex, the
only solutions are $B = \pm\I/2$. Thus $q = \frac{1}{2}\pm\frac{\xi\I}{2}$.
\end{proof}
The roots of $-1$ in \B include two trivial cases, as noted in \cite[Theorem 2.1]{10.1007/s00006-006-0005-8}.
These are $\pm\I$ which yields the trivial idempotents $q=0$ and $q=1$; and $\pm\mu$ where $\mu$ is a
real pure quaternion root of $-1$.

Since every biquaternion idempotent, $q$, is a solution of $q^2=0$,
it must also be a solution of $q(q-1)=0$.
Therefore every biquaternion idempotent is also a divisor of zero and must
satisfy the conditions stated in §\,\ref{sec:divisors}.
This is demonstrated in detail in \cite[§\,4, Proposition 2]{arXiv:0812:1102,10.1007/s00006-xxxx-xxxx-x}.

All non-pure biquaternion divisors of zero have been shown to be of the form
$p = \alpha q$ where $\alpha$ is a non-zero complex number,
and $q$ is a biquaternion idempotent \cite[§\,4, Theorem 3]{arXiv:0812:1102,10.1007/s00006-xxxx-xxxx-x}.
Further, dividing a non-pure biquaternion divisor of zero by twice its scalar
part yields an idempotent \cite[§\,4, Corollary 1]{arXiv:0812:1102,10.1007/s00006-xxxx-xxxx-x}.

\subsubsection{Nilpotents}
\label{sec:nilpotents}
A nilpotent is a quantity whose square vanishes. Hamilton discovered nilpotent
biquaternions and was aware that all such biquaternions were divisors of zero
\cite[Lecture VII, §\,674, pp671-3]{Hamilton:1853}.

We first present the case of a pure biquaternion with vanishing semi-norm
and show a result due to Hamilton \cite[Lecture VII, §\,672, p669]{Hamilton:1853}
that the square of such a biquaternion vanishes.
\begin{theorem}\label{vanishsquare}
Let $p = p_r + \I p_i$ be a non-zero pure biquaternion with vanishing semi-norm.
Then $p^2=0$.
\end{theorem}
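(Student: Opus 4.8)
The plan is to exploit the fact that, for a \emph{pure} biquaternion, the quaternion conjugate reduces to simple negation, and then to invoke Coxeter's formula $\norm{p}=p\qconjugate{p}$ recorded in \S\ref{sec:seminorm}. First I would note that since $p$ is pure its scalar part vanishes, $\Scalar{p}=0$, so that by equation \ref{qconjugate} we have $\qconjugate{p}=\Scalar{p}-\Vector{p}=-p$. Substituting this into the semi-norm formula gives $\norm{p}=p\,\qconjugate{p}=p(-p)=-p^2$. By hypothesis the semi-norm of $p$ vanishes, so $\norm{p}=0$, and therefore $p^2=0$ at once.

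The only subtlety worth flagging is that Coxeter's identity $\norm{q}=q\,\qconjugate{q}$ must be known to persist for biquaternions despite the semi-norm being complex-valued; but this is precisely the observation made in \S\ref{sec:seminorm}, so I may use it without further comment. Note also that purity of $p$ is used twice in spirit: once to reduce $\qconjugate{p}$ to $-p$, and once (implicitly, through Lemma \ref{normlemma}) to guarantee that a pure biquaternion can indeed have a vanishing semi-norm, as remarked in \S\ref{sec:divisors}.

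I anticipate no real obstacle here --- this is exactly the kind of quaternion formula that is immediately verifiable once written down. As a cross-check one could instead proceed through Lemma \ref{normlemma}: purity forces $p_r$ and $p_i$ to be pure quaternions, the vanishing semi-norm forces $\norm{p_r}=\norm{p_i}$ together with $\inner{p_r}{p_i}=0$, and expanding $p^2=(p_r+\I p_i)^2=p_r^2-p_i^2+\I(p_r p_i+p_i p_r)$ and applying $\norm{u}=-u^2$ for pure $u$ together with equation \ref{simpleinner} would show both the real and imaginary parts of $p^2$ vanish separately. This longer route merely re-derives the same conclusion, so I would present the one-line argument via $\qconjugate{p}=-p$.
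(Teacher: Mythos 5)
Your proof is correct, and it takes a genuinely different and more economical route than the paper's. You observe that purity gives $\qconjugate{p}=-p$, so Coxeter's identity $\norm{p}=p\,\qconjugate{p}$ (which the paper explicitly states persists for biquaternions in \S\ref{sec:seminorm}) yields $\norm{p}=-p^2$, and the hypothesis $\norm{p}=0$ finishes it. The paper instead unpacks the vanishing semi-norm via Lemma \ref{normlemma} into Hamilton's conditions ($\norm{p_r}=\norm{p_i}$ and $\inner{p_r}{p_i}=0$), normalises $p$ to the form $\mu+\I\nu$ with $\mu,\nu$ perpendicular unit pure quaternions, and expands the square so that the real part cancels ($\mu^2-\nu^2=0$) and the imaginary part vanishes by anticommutativity ($\mu\nu+\nu\mu=0$) --- essentially the ``cross-check'' you sketch at the end, plus a normalisation step. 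What your argument buys is brevity and strength: the identity $\norm{p}=-p^2$ for pure biquaternions gives the converse for free (a pure biquaternion is nilpotent \emph{iff} its semi-norm vanishes), and it never needs the real/imaginary decomposition. What the paper's argument buys is structure: the normal form $\mu+\I\nu$ is reused immediately afterwards in \S\ref{sec:nilpotents} to describe all nilpotents as built from a pair of orthogonal unit pure quaternions, which your one-liner does not exhibit.
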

\begin{proof}
From the fact that $p$ has a vanishing semi-norm, $\norm{p_r} = \norm{p_i}$,
therefore we may divide by their common norm,
and obtain $p/\norm{p_r} = \mu + \I\nu$ where $\mu$ and $\nu$ are perpendicular unit pure quaternions.
Then:
\[
\left(\frac{p}{\norm{p_r}}\right)^2 = (\mu + \I\nu)^2 = \mu^2 - \nu^2 + \I(\mu\nu + \nu\mu)
\]
which vanishes because the squares of the two unit pure quaternions are $-1$
and the imaginary part vanishes because the products of two perpendicular unit pure quaternions
changes sign when the order of the product is reversed.
\end{proof}
Sangwine and Alfsmann, in \cite[§\,5, Lemma 1]{arXiv:0812:1102,10.1007/s00006-xxxx-xxxx-x} have shown that all
nilpotent biquaternions are pure,
and in \cite[§\,5, Proposition 3]{arXiv:0812:1102,10.1007/s00006-xxxx-xxxx-x} that all nilpotent biquaternions
are divisors of zero.
It follows that all nilpotent biquaternions have real and imaginary parts with equal norm
and therefore that any nilpotent biquaternion can be normalised to the form $\mu+\I\nu$ as
in Theorem \ref{vanishsquare}.
We see therefore that all nilpotent biquaternions are constructed from a pair of mutually
orthogonal unit pure quaternions.

A nilpotent biquaternion can be written in the form $\xi B$ where $\xi$ is a root of $-1$
and $B$ is complex (the square root of the semi-norm: $B = \sqrt{X^2+Y^2+Z^2}$).
Since a nilpotent biquaternion is a divisor of zero, its semi-norm is zero.
This means that $B$ is zero, even though $\xi B$ is not.
A consequence is that it is not possible to compute the `axis' of a nilpotent
biquaternion by dividing by $B$.

\subsection{The biquaternions as a geometric algebra}\label{sec:geometric}
In this section we discuss equivalence of the components of a biquaternion to the
elements of a geometric algebra
\cite{HestenesSobczyk:1984,Sommer:2001,HestenesLiRockwood:2001,10.1098/rsta.2000.0517,gaprimer:2003}
or \emph{Clifford algebra}.
Jaap Suter's tutorial paper \cite{gaprimer:2003} is particularly recommended.
A full discussion of the biquaternions as a geometric algebra is a large and non-trivial topic,
and will be left to a later paper or papers.

Hamilton himself, in his \textit{Lectures on Quaternions} \cite[Lectures I, II, III]{Hamilton:1853}
was interested in an algebra of points, lines and planes, and clearly was motivated by a desire to
have an algebra applicable to 3-dimensional geometry. The modern notion of a geometric algebra uses
different language, and some of the concepts are more sophisticated, but essentially the aim is the
same: to represent and manipulate geometric objects algebraically.

A geometric algebra contains four types of element, as already introduced in §\,\ref{sec:complexq}
and Tables~\ref{correspondence} and~\ref{geomconj}: scalars, vectors, bivectors and pseudoscalars.
Scalars are quantities without geometric form (or points),
vectors represent directed magnitudes (or lines),
bivectors represent directed areas, and pseudoscalars represent signed volumes.
The product of two perpendicular vectors is a bivector, representing the area swept
out by one vector when moved along the other \cite[§\,2.1]{gaprimer:2003}.
This is why vectors are identified with pure imaginary biquaternions.
Given two perpendicular pure quaternions, $\mu$ and $\nu$, such that $\inner{\mu}{\nu}=0$,
the two vectors $\I\mu$ and $\I\nu$ have a product $-\mu\nu$ which is perpendicular to
both $\mu$ and $\nu$.
The negative sign is a consequence of squaring \I.
Changing the order of the two vectors changes the sign of the resulting bivector
(it is oppositely directed), so the minus sign is of little consequence.
It is usual in geometric algebra to have to choose the sense by convention.
Here the rules of biquaternion multiplication define the convention for us.
The product of three orthogonal vectors defines a volume, represented by a pseudoscalar.
For example, $(\I\i)(\I\j)(\I\k)=\I$, as given by Ward \cite[p\,113]{Ward:1997}.

There has been much confusion caused by the use of the term \emph{vector}
when in fact the objects under consideration were \emph{bivectors}.
In physics the concepts of \emph{axial} and \emph{polar} vectors have been
used, causing further confusion, because the two terms suggest different
types of vector, rather than fundamentally different types of quantity.
Even the present authors, in their earlier works, used the term vector
to refer to quantities which are now clearly seen to be bivectors.
Jaap Suter in his \textit{Geometric Algebra Primer} states clearly:
\begin{quote}
A quaternion is a scalar plus a bivector.
\end{quote}
and he is right \cite[p\,50]{gaprimer:2003}.
Unfortunately, the scalar/vector part terminology commonly applied to quaternions
has this wrong, and we are stuck with it.

In geometric algebra a composite quantity consisting of two or more
of the four types of quantity (scalar, vector, bivector, pseudoscalar)
is called a \emph{multivector}.
A biquaternion may contain all four types of geometric element,
and except in degenerate cases (such as a pure imaginary biquaternion),
corresponds to the concept of a multivector.

Once we start to multiply multivectors, the geometric interpretation becomes
more difficult, although the rules of multiplication are well-understood and
easily described: the full biquaternion product can be expressed, exactly as
in the quaternion case, as:
\[
p\,q = \Scalar{p}\Scalar{q} + \Scalar{p}\Vector{q} + \Scalar{q}\Vector{p}
     + \Vector{p}\Vector{q}
\]
and the last term can be written as $\Vector{p}\cdot\Vector{q}+\Vector{p}×\Vector{q}$ or
$\inner{\Vector{p}}{\Vector{q}}+\Vector{p}×\Vector{q}$, that is the sum of a dot or
scalar or inner product, and a cross product.
A deeper analysis of the biquaternions as a geometric algebra requires further work.

Although multivectors in geometric algebra are regarded as composites of
scalar, vector, bivector and pseudoscalar,
a biquaternion is not constructed in this way.
Instead it has four complex components,
but as we have seen in §\,\ref{sec:complexq},
it is possible to interpret these in multiple ways, for example as a complex
number with quaternion components.
It is also possible to consider the biquaternions as 8-dimensional quantities with basis
$(1,\i\I,\j\I,\k\I,\i,\j,\k,\I)$ where we place the vector basis first.
The multiplication table is then easily derived from the rules of quaternion
multiplication (Table \ref{biquatmult}).
\begin{table}
\begin{center}
$
\begin{array}{r||r|rrr|rrr|r}
\hline
     &   1  & \i\I & \j\I & \k\I & \i   & \j   & \k   & \I   \\
\hline\hline
   1 &   1  & \i\I & \j\I & \k\I & \i   & \j   & \k   & \I   \\
\hline
\i\I & \i\I &   1  & -\k  &  \j  & -\I  & \k\I &-\j\I & -\i  \\
\j\I & \j\I &  \k  &   1  & -\i  &-\k\I & -\I  & \i\I & -\j  \\
\k\I & \k\I & -\j  &  \i  &   1  & \j\I &-\i\I & -\I  & -\k  \\
\hline
\i   & \i   & -\I  & \k\I &-\j\I &  -1  & \k   & -\j  & \i\I \\
\j   & \j   &-\k\I & -\I  & \i\I &  -\k &  -1  &  \i  & \j\I \\
\k   & \k   & \j\I &-\i\I & -\I  &   \j & -\i  &  -1  & \k\I \\
\hline
  \I &   \I &  -\i & -\j  & -\k  & \i\I & \j\I & \k\I &  -1  \\
\hline
\end{array}
$
\end{center}
\caption{\label{biquatmult}Biquaternion basis multiplication table.}
\end{table}

From the rules of quaternion multiplication, it is fairly easy to draw up
a table showing how the geometric components of a biquaternion multiply
together.
Table \ref{multivectormult} shows the result.
The four centre entries apply in the general case, but in the specific
cases where the bivector/vector are perpendicular, the scalar or
pseudoscalar part of the product will be zero.
Multiplication by the pseudoscalar is known as the \emph{dual} operation:
it maps a bivector into a vector, and a scalar into a pseudoscalar
or \textit{vice versa}.
\begin{table}
\begin{center}
\begin{tabular}{l@{\,}c@{\,}l||cccc}
\hline
  &  &              & S &   B   &   V   & P\\
\hline
\hline
S &--& scalar       & S &   B   &   V   & P\\
B &--& bivector     & B & S + B & P + B & V \\
V &--& vector       & V & P + B & S + B & B\\
P &--& pseudoscalar & P &   V   &   B   & S\\
\hline
\end{tabular}
\end{center}
\caption{\label{multivectormult}Multiplication table for components of a biquaternion multivector}
\end{table}

\subsubsection{The wedge or outer product}
In geometric algebra, the algebraic or geometric product of two vectors is the sum
of the scalar product and the wedge ($\wedge$) or outer product (also known as the cross
product in three dimensions):
\[
p\,q = p \cdot q + p \wedge q
\]
It is possible to define a wedge product for quaternions and biquaternions
using a formula widely used in geometric algebra \cite[Equation 3.10]{gaprimer:2003}: 
\[
p \wedge q = \frac{1}{2}\left(p\,q - q\,p\right)
\]
This gives the cross product of the vector parts (because the components of
the products involving the scalar parts, and the dot product of the vector
parts, commute and therefore cancel).
If applied to a pair of vectors (i.e. pure biquaternions with zero real part),
the result is a bivector (i.e. a pure biquaternion with zero imaginary part).
Applied to a pair of bivectors (pure quaternions) the result is another bivector.
Applied to a vector and a bivector, the result is a vector.

\section{Further work}
There is much further interpretation needed of the biquaternions as a geometric algebra.
At present formulae are lacking for vector/bivector products to yield pseudoscalars
and for generalisations of the wedge product.

The interpretation of complex angles in the polar form and in the context of the inner
product requires further work.

\bibliographystyle{hplain}
\bibliography{sangwine,quaternions,maths,clifford,online}
\end{document}